\newcommand{\dd}{\mathrm{d}}
\newcommand{\DD}{\mathrm{D}}
\newcommand{\sq}[1]{[\![{#1}]\!]}
\newcommand{\sg}[1]{\langle{#1}\rangle}
\newcommand{\HH}{\mathcal{H}}
\DeclareMathOperator{\Ap}{Ap}
\DeclareMathOperator{\F}{F}
\newtheorem{theorem}{Theorem}
\newtheorem{lemma}[theorem]{Lemma}
\newtheorem{corollary}[theorem]{Corollary}
\theoremstyle{definition}
\newtheorem{dfn}[theorem]{Definition}
\theoremstyle{remark}
\newtheorem{remark}[theorem]{\bf Remark}
\title{On the number of \textsf L-shapes in embedding dimension four}
\author{F. Aguil\'o-Gost}
\address{Departament de Matem\`atica Aplicada IV\\ Univ Polit\`ecnica de Catalunya\\ Barcelona, Espa\~na}
\thanks{Supported by MTM2011-28800-C02-01 and 2009SGR1387}
\email{matfag@ma4.upc.edu}
\author{P. A. Garc\'ia-S\'anchez}
\address{Departamento de \'Algebra\\ Universidad de Granada\\
Granada, Espa\~na} 
\email{pedro@ugr.es}
\thanks{Supported by MTM2010-15595, FQM-343, FQM-5849 and FEDER funds; part of this work was done during a visit of the second author to the Universidad de Almer\'{\i}a supported by the `plan propio' of this university}   
\author{D. Llena}
\address{Departamento de Matem\'aticas\\ Universidad de Almer\'{\i}a\\ Almer\'{\i}a, Espa\~na}
\email{dllena@ual.es}
\thanks{Supported by MTM2010-15595, FQM-343 and FEDER funds}
\begin{document}
\maketitle

\begin{abstract}
\textit{Minimum distance diagrams}, also known as \textit{\textsf{L}--shapes}, have been used to study some properties related to \textit{weighted Cayley digraphs} of \textit{degree} two and \textit{embedding dimension three numerical semigroups}. In this particular case, it has been shown that these discrete structures have at most two related  \textsf{L}--shapes. These diagrams are proved to be a good tool for studing \textit{factorizations} and the \textit{catenary degree} for semigroups and \textit{diameter} and \textit{distance} between vertices for digraphs.

This maximum number of \textsf{L}--shapes has not been proved to be kept when increasing the degree of digraphs or the embeding dimension of semigroups. In this work we give a family of embeding dimension four numerical semigroups $S_n$, for odd $n\geq5$, such that the number of related \textsf{L}--shapes is $\frac{n+3}2$. This family has her analog to weighted Cayley digraphs of degree three.

Therefore, the number of \textsf{L}--shapes related to numerical semigroups can be as large as wanted when the embedding dimension is at least four. The same is true for weighted Cayley digraphs of degree at least three. This fact has several implications on the combinatorics of factorizations for numerical semigroups and minimum paths between vertices for weighted digraphs.

\vspace*{20pt}

\noindent\textsc{Keywords}: Numerical semigroup, factorization, weighted Cayley digraph, \textsf{L}--shape.

\noindent\textsc{MSC}: 05C90, 11D07, 11D45, 11P21.
\end{abstract}

\section{Introduction}
Minimum Distance Diagrams (MDD for short) have been used in different discrete structures to study several optimization problems. Most known examples of this use are metrical optimization problems in Cayley digraphs on cyclic finite Abelian groups and several questions in numerical semigroups. Frobenius number computation, factorization related properties and the study of Ap\'{e}ry sets are some applications in the latter example.

A \emph{Cayley digraph} $G=C(N;s_1,\ldots,s_k;p_1,\ldots,p_k)$ on the cyclic finite Abelian group $\mathbb{Z}_N$ generated by the generator set $B=\{s_1,\ldots,s_k\}\subset\mathbb{Z}_N\setminus\{0\}$, $\gcd(N,s_1,\ldots,s_k)=1$,  is a directed graph with set of vertices $V=\mathbb{Z}_N$ and set of arcs $A=\left\{m\stackrel{p_i}{\longrightarrow}(m+s_i)\pmod{N} \mid m\in V,1\leq i\leq k\right\}$, where $p_i$ is the weight of the arc defined by $s_i$, $i\in\{1,\ldots,k\}$. The \emph{length} of a path in $G$ is the sum of the weights of the arcs in the path. A \textit{minimum path} from $m$ to $n$ is a connecting path from $m$ to $n$ with minimum length in $G$. The \textit{distance} from $m$ to $n$, $\dd(m,n)$, is the length of a minimum path from $m$ to $n$. The \textit{diameter} of $G$, $\DD(G)$, is the maximum of the distances between pairs of vertices in $G$. The metric on $G$ depends on the weights of his arcs.

Let us consider unit cubes in $\mathbb{R}^k$. Each unit cube $[i_1,i_1+1]\times[i_2,i_2+1]\times\dots\times[i_k,i_k+1]\in\mathbb{R}^k$ has integral coordinates $(i_1,\ldots,i_k)\in\mathbb{Z}^k$ and it is usually labelled with the vertex $i_1s_1+\dots+i_ks_k\pmod{N}$ (sometimes it is also labelled with his `weight' $i_1p_1+\dots+i_kp_k$). We denote the unit cube with coordinates $(i_1,\ldots,i_k)$ by $\sq{i_1,\ldots,i_k}$. Let $\leq$ be the usual partial ordering in $\mathbb{N}^k$. A unified definition of minimum distance diagrams was given by P.~Sabariego and F.~Santos in 2009 \cite[Definition~2.1]{SS:09} although other authors used this concept, see for instance Fiol et al. \cite{FYAV:87} and R\"odseth \cite{Ro:96}. Following the definition of \cite{SS:09}, a \emph{minimum distance diagram} $\HH$ related to $G$ is a connected set of $N$ unit cubes in $\mathbb{R}^k$ with different vertex label and the following two properties
\begin{itemize}
\item[(1)] if $u=\sq{i_1,\ldots,i_k}\in\HH$, then the weight $\|u\|=i_1p_1+\cdots+i_kp_k$ is minimum over all cubes with coordinates $(j_1,\ldots,j_k)\in\mathbb{N}^k$ fulfilling $j_1s_1+\cdots+j_ks_k\equiv i_1s_1+\cdots+i_ks_k\pmod{N}$,
\item[(2)] if $v=\sq{j_1,\ldots,j_k}$ is a cube with $(j_1,\ldots,j_k)\leq(i_1,\ldots,i_k)$, then $v\in\HH$.
\end{itemize}
It have been proved these diagrams are L-shaped regions of the plane (or rectangles) when $k=2$ (\cite{FYAV:87}). For this reason they are called \textit{\textsf L-shapes} when $k=2$ and \textit{hyper \textsf L-shapes} when $k\geq3$.

\begin{figure}[h]
\centering
\includegraphics[width=0.3\linewidth]{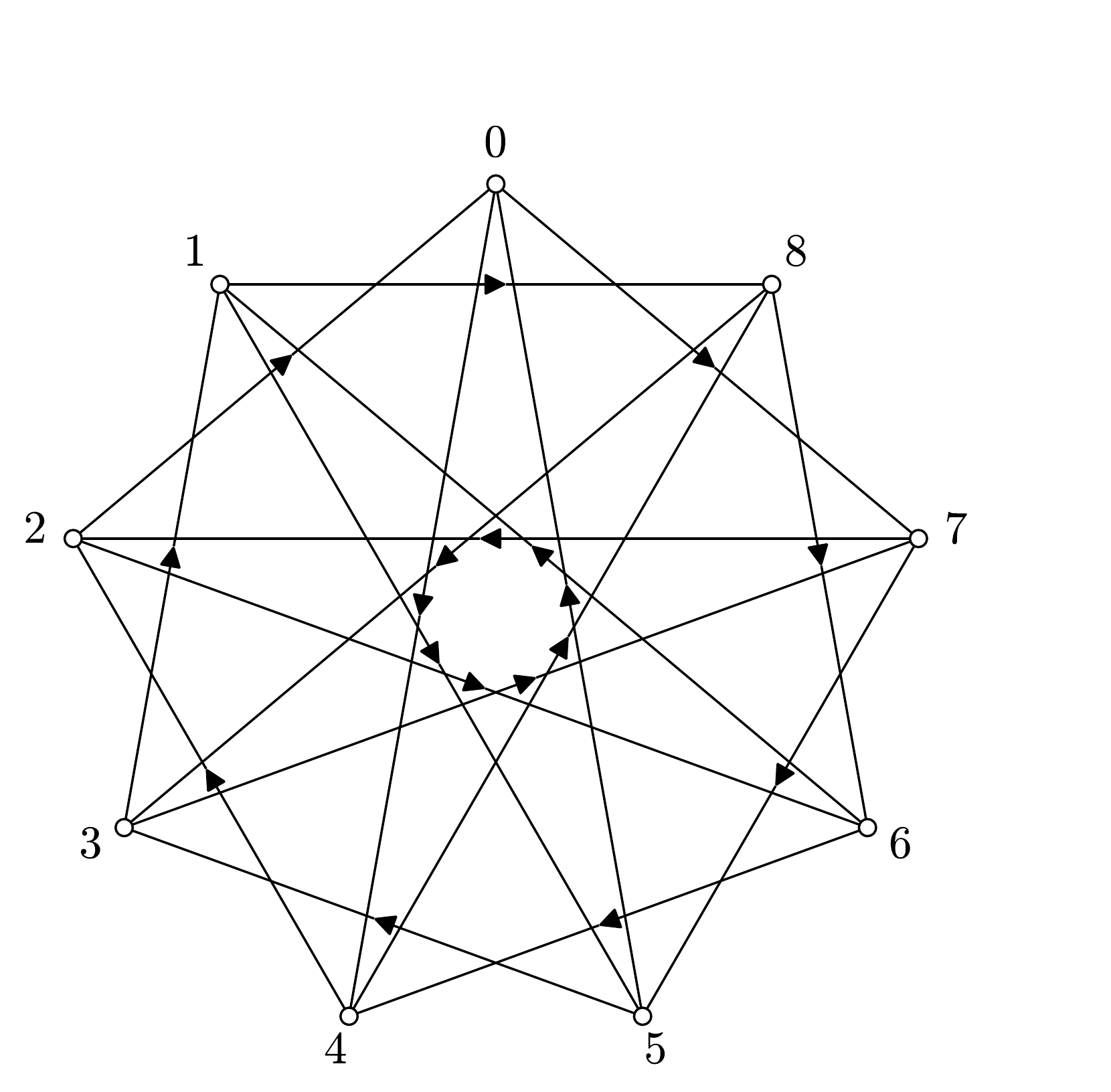}
\hspace*{0.05\linewidth}
\includegraphics[width=0.15\linewidth]{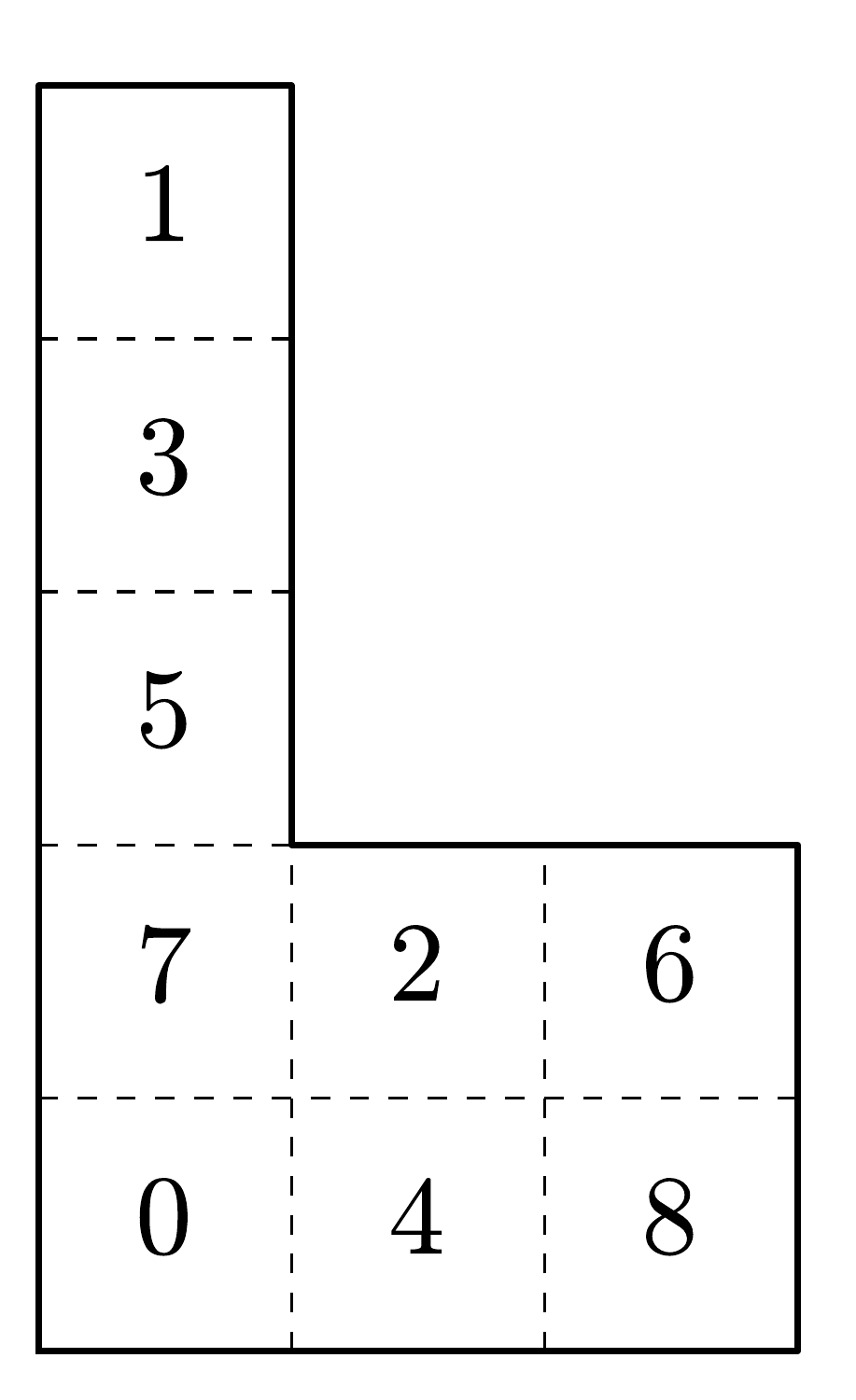}
\hspace*{0.1\linewidth}
\includegraphics[width=0.19\linewidth]{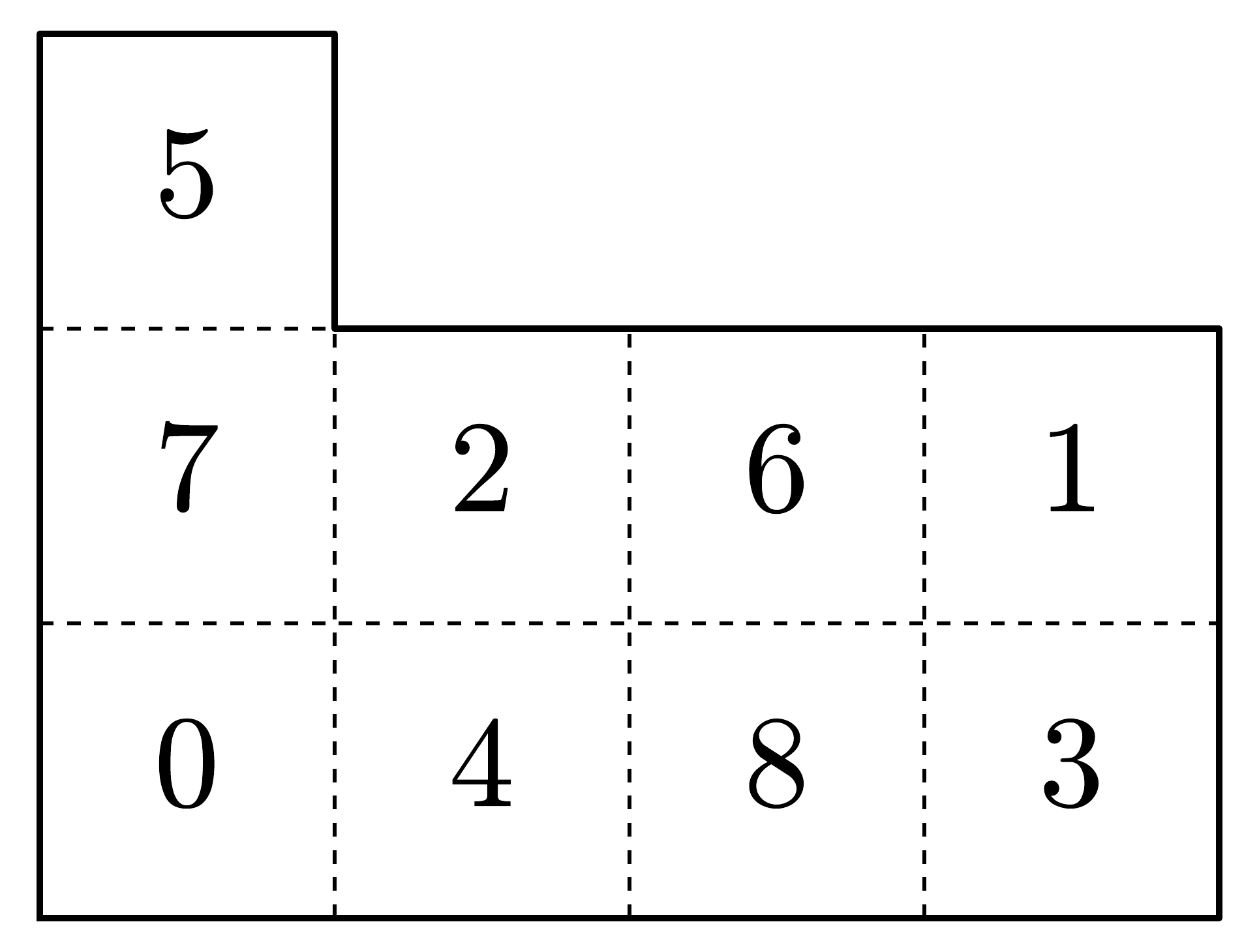}
\caption{$C(\mathbb{Z}_9;4,7;1,1)$ and two related MDD}
\label{fig:c947}
\end{figure}

Usually, problems in Graph Theory are stated in the non--weighted version of arcs, that is, $p_1=\cdots=p_k=1$. For instance, Figure~\ref{fig:c947} shows two minimum distance diagrams associated with $C(\mathbb{Z}_9;4,7;1,1)$. There is a significative difference between $k=2$ and $k\geq3$. When $k=2$, it has been shown that these digraphs have two related MDD at most. For $k=3$, Sabariego and Santos \cite{SS:09} gave an infinite family of digraphs with many associated MDDs. More precisely, given $t\not\equiv 0\pmod{3}$, set $m=2+t+t^2$; then, the digraph $G_t=C(m(m-1);1+m,1+mt,1+mt^2;1,1,1)$ has $3(t+2)$ associated MDDs. Taking $t=2$, the digraph $C(\mathbb{Z}_{56};9,17,33;1,1,1)$ has $12$ different associated MDDs that have been depicted in Figure~\ref{fig:c3D}.

\begin{figure}[h]
\centering
\includegraphics[width=0.125\linewidth]{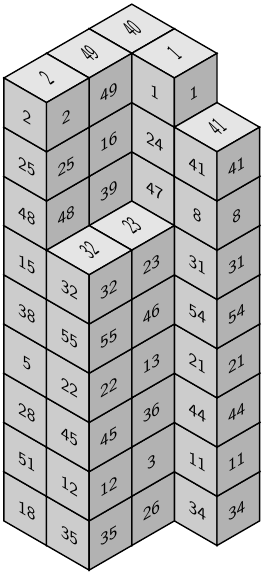}
\hspace*{0.07\linewidth}
\includegraphics[width=0.2\linewidth]{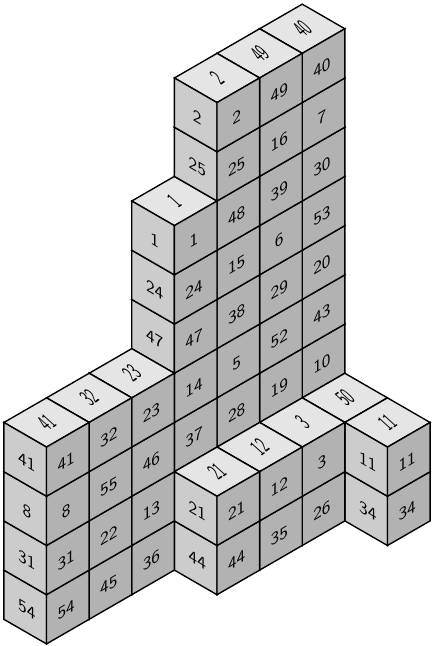}
\hspace*{0.07\linewidth}
\includegraphics[width=0.25\linewidth]{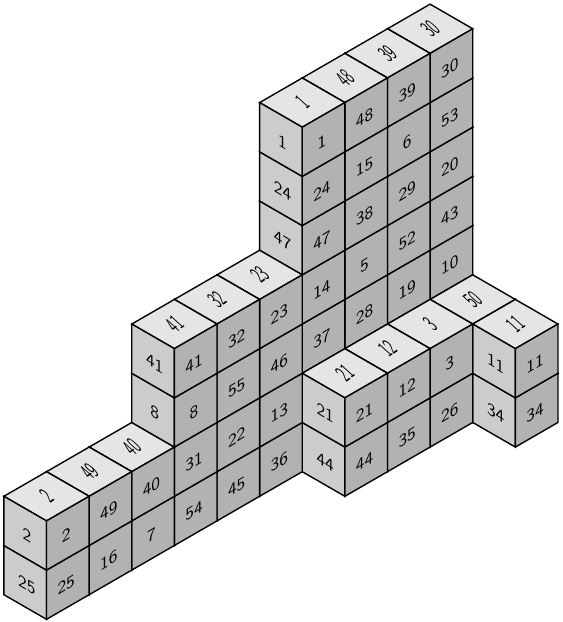}\\
\includegraphics[width=0.25\linewidth]{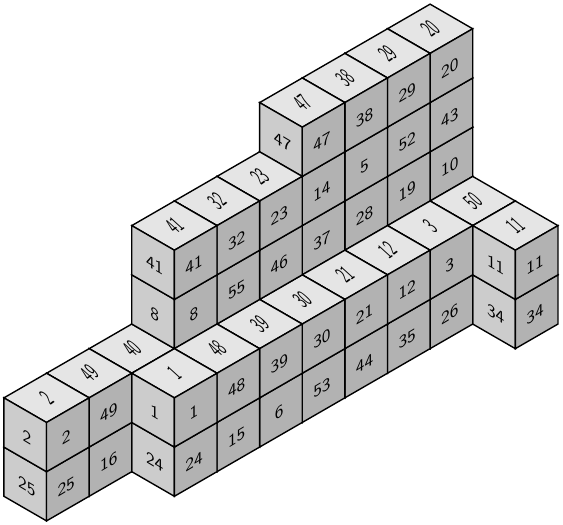}
\hspace*{0.07\linewidth}
\includegraphics[width=0.25\linewidth]{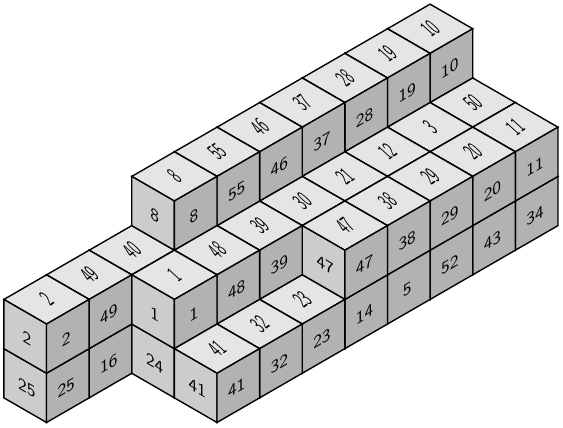}
\hspace*{0.07\linewidth}
\includegraphics[width=0.15\linewidth]{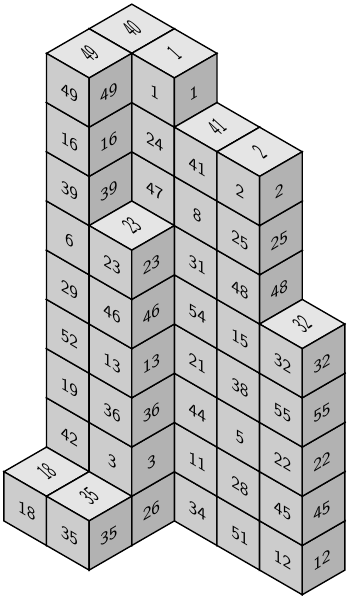}\\
\includegraphics[width=0.2\linewidth]{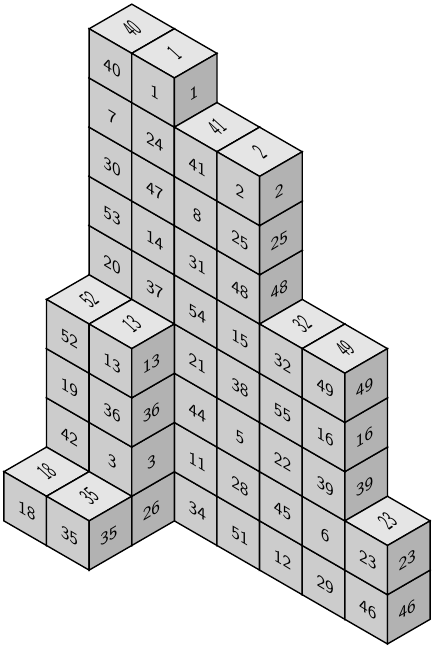}
\hspace*{0.05\linewidth}
\includegraphics[width=0.27\linewidth]{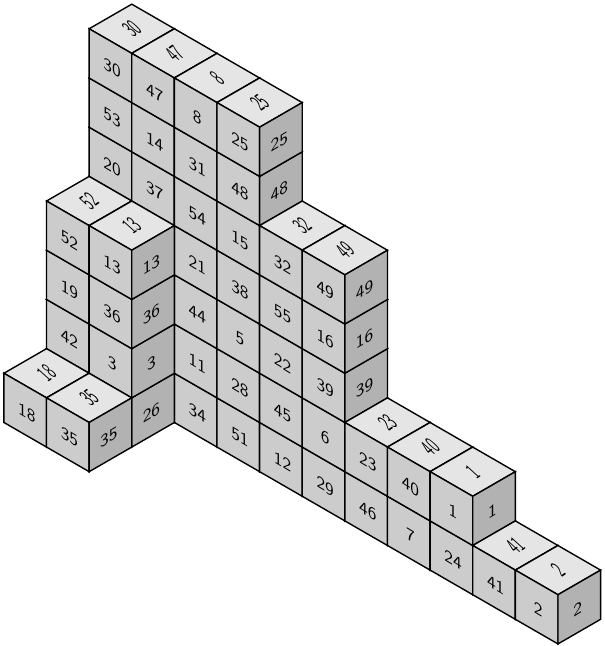}
\hspace*{0.05\linewidth}
\includegraphics[width=0.27\linewidth]{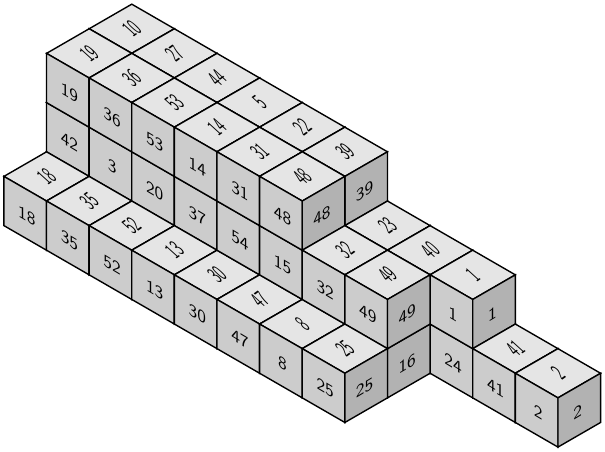}\\
\includegraphics[width=0.32\linewidth]{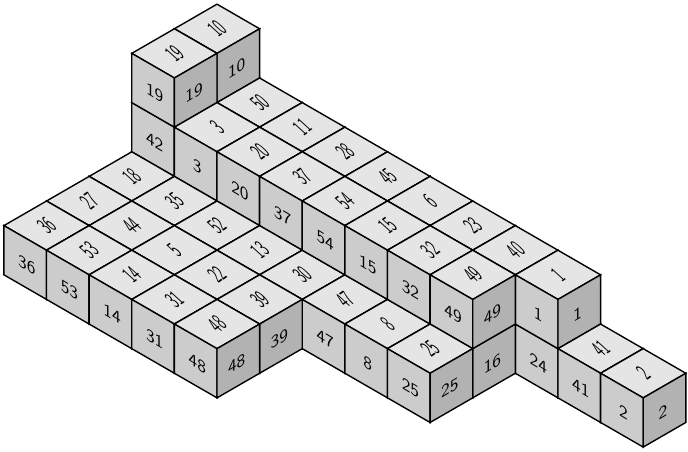}
\includegraphics[width=0.32\linewidth]{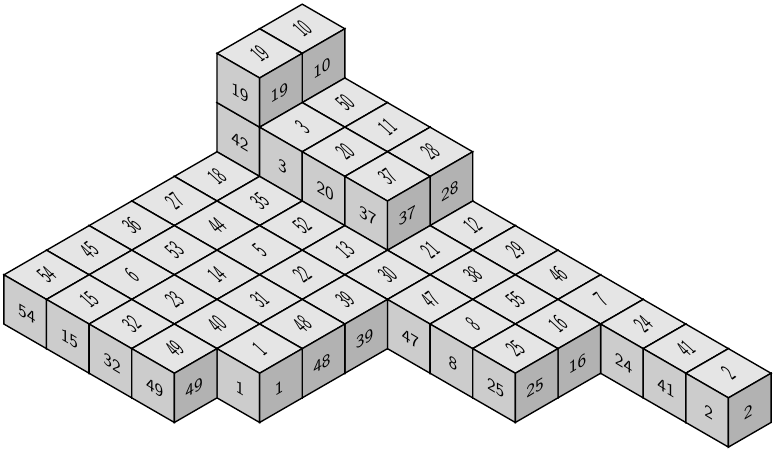}
\includegraphics[width=0.32\linewidth]{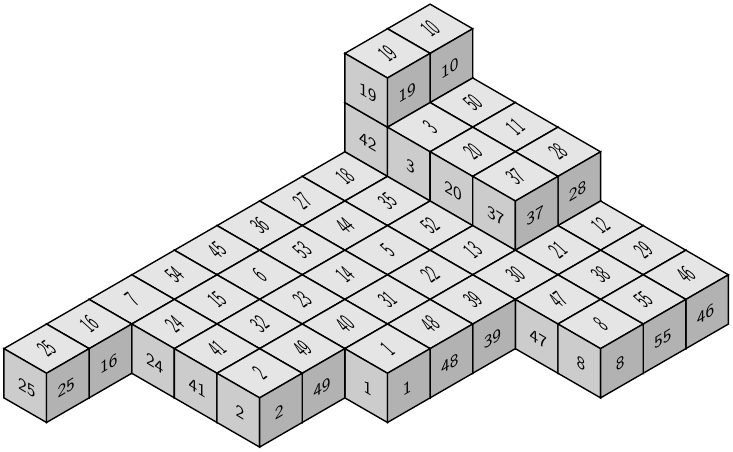}
\caption{$C(\mathbb{Z}_{56};9,17,33;1,1,1)$ and related MDDs}
\label{fig:c3D}
\end{figure}

Changing weights $\{p_1,\ldots,p_k\}$ provides a different metric on the digraph. Thus, the number of minimum distance diagrams eventually decreases. Non-weighted version of the digraph has more related MDDs than the weighted one. For instance, changing $p_1=p_2=1$ to $p_1=2$ and $p_2=3$, only the second diagram in Figure~\ref{fig:c947} is an MDD related to $C(\mathbb{Z}_9;4,7;2,3)$. Also taking $p_1=9$, $p_2=17$ and $p_3=33$ only the fifth and the last diagrams in Figure~\ref{fig:c3D} are MDDs related to $G_2=C(\mathbb{Z}_{56};9,17,33;9,17,33)$. Table~\ref{tab:nMDD} shows the number of MDDs resulting from taking the weights $p_1=1+m$, $p_2=1+mt$ and $p_3=1+mt^2$ in $G_t$. Important types of weights are $p_i=s_i$ for all $i$. This choice of weights can modelize some properties of numerical semigroups on  digraphs.

\begin{table}[h]
\centering
\begin{tabular}{|r||rrrrrrrrrrrrr|}\hline
$t$&2&4&5&7&8&10&11&13&14&16&17&19&20\\\hline
$N_t$&56&462&992&3306&5402&12432&17822&33672&44732&74802&94556&145542&177662\\
\# non--w.&12&18&21&27&30&36&39&45&48&54&57&63&66\\
\# w.&      2& 2& 2&2& 2& 2&12&2 & 2& 14& 2& 2& 2\\\hline
\end{tabular}
\vspace*{0.01\linewidth}
\caption{Number of related MDDs for weighted and non--weighted $G_t$, $t\not{\!\!\equiv}0\pmod{3}$}
\label{tab:nMDD}
\end{table}

Now, we look at numerical semigroups to translate the Minimum Distance Diagrams to this setting.

Given $n_1,\ldots,n_k\in\mathbb{N}$ with $\gcd(n_1,\ldots,n_k)=1$, the \emph{numerical semigroup} generated by $A=\{n_1,\ldots,n_k\}$ is the set $S=\sg{n_1,\ldots,n_k}=\{x_1n_1+\cdots+x_kn_k:~(x_1,\ldots,x_k)\in\mathbb{N}^k\}$. We use the notation $Ax=x_1n_1+\cdots+x_kn_k$ for $x\in\mathbb{N}^k$.

Let $S$ be a numerical semigroup minimally generated by $A$ (no proper subset of $A$ generates the same semigroup), and let $k$ be the cardinality of $A$, which is known as the \emph{embedding dimension} of $S$. 

The \emph{set of factorizations} of $s\in S$ is $\mathsf Z(s)=\{x\in \mathbb N^k \mid Ax=s\}$. For a subset $X$ of $S$, let $\mathsf Z(X)=\bigcup_{x\in X}\mathsf Z(x)$ (this union is disjoint).


For $a,b\in \mathbb Z$, we write $a\le_S b$ if $b-a\in S$. Let $C\subseteq S$. We say that $C$ is \emph{closed} if whenever $a\in C$ and $b\le_S a$, then $b\in C$. 

\begin{dfn}
Fixed $C$ a nonempty closed subset of $S$, we say that $L \subset \mathbb N^k$ is an \emph{\textsf L-shape} associated to $C$ if the following two properties hold
\begin{enumerate}[(C1)]
\item the map $x\mapsto Ax$ is a bijection from $L$ to $C$ ($\# (\mathsf Z(c)\cap L)=1$ for all $c\in C$),
\item if $x\in L$, then $y\in L$ for every $y\in\mathbb N^k$ with $y\le x$.
\end{enumerate}
\end{dfn}

A particular case of closed sets in a numerical semigroup are the Ap\'ery sets. Let $m\in S\setminus\{0\}$. The \emph{Ap\'ery set} of $m$ in $S$ is the set 
\[
\Ap(S,m)=\{ s\in S\mid s-m\not\in S\}.
\]
It can be easily shown that $\Ap(S,m)=\{w_0,\ldots, w_{m-1}\}$, where $w_i=\min \{s\in S\mid s\equiv i\pmod{m}\}$ for $i\in\{0,\ldots, m-1\}$. In particular, the cardinality of $\Ap(S,m)$ is $m$ (see for instance \cite[Lemma 2.4]{ns}).

Given a numerical semigroup $S=\sg{n_1,\ldots,n_k}$, let us consider the related digraph
\[
G_S=C(n_k;n_1,\ldots,n_{k-1};n_1,\ldots,n_{k-1}).
\]
Many metric properties of the digraph $G_S$ give information on the semigroup $S$. For instance, for $k=3$, generic properties of the sets of factorizations are studied in \cite{AG:2010}.

Let us denote the weight of the unit cube $u=\sq{i_1,\ldots,i_{k-1}}$ as $\|u\|=n_1i_1+\cdots+n_{k-1}i_{k-1}$. If $\HH$ is a minimum distance diagram associated with $G_S$, then it can be seen that
\[
\mathcal{W}_{\HH}=\{\|u\| \mid u\in\HH\}=\Ap(S,n_k).
\]
Thus, an \textsf L-shape related to $S$ is equivalent to a minimum distance diagram associated with $G_S$. It is also known that for embedding dimension three, $S$ admits at most two $L$-shapes. So, there is a natural question arising from this equivalence for embedding dimension larger than three: are there numerical semigroups with a number of related \textsf L-shapes as large as we want? As far as we know, there is no related work in the bibliography.

\section{A distinguished infinite family of $4$--semigroups}

We have implemented the construction of \textsf L-shapes in the \texttt{numericalsgps} (\cite{numericalsgps}) \texttt{GAP} (\cite{GAP4}) package. Computer evidence convinced us to look for a parameterized family of embedding dimension four numerical semigroups with as many \textsf L-shapes as desired. 

Bresinsky in \cite{bresinsky} gave a family of numerical semigroups with embedding dimension four with arbitrary large minimal presentations (embedding dimension three numerical semigroups admit minimal presentations with at most three elements; thus the analogy with our setting). Unfortunately all the elements in his family have exactly two \textsf L-shapes.

In order to look for primitive elements, we developed an algorithm in \cite{AGL:2014} that gave us some light to find the family of semigroups that we present in this paper.

Let $n$ be an odd integer greater than or equal to five. Set
\[T=\langle n,3n-2, 3n-1\rangle \hbox{ and } S=T\cup\{\mathrm F(T)\}=\langle n,3n-2,3n-1,\mathrm F(T)\rangle,\]
where $\F(T)$ denotes $\max(\mathbb Z\setminus T)$, the Frobenius number of $T$.

Observe that a minimal generating set for $T$ is $\{n,3n-2,3n-1\}$. Hence there exists an epimorphism $\phi:\mathbb N^{3}\to T$, $\phi(a,b,c)=an+b(3n-2)+c(3n-1)$. The kernel of $\phi$, $\ker\phi=\{ (\alpha,\beta)\in \mathbb N^{3}\times \mathbb N^{3}\mid \phi(\alpha)=\phi(\beta)\}$ is a congruence. A minimal generating system of $\ker\phi$ as a congruence is known as a \emph{minimal presentation} for $T$.
Minimal presentations turn out to be a key tool in the study of factorizations. 

In order to find a minimal presentation of $T$ we must find the least multiple of each generator that belongs to the semigroup spanned by the other two (see for instance \cite[Example 8.23]{ns}).

Clearly, $2(3n-1)=3n+(3n-2)$. Observe that if we look for the least multiple of $3n-2$ that belongs to $\langle n,3n-1\rangle$, we have to solve the equation $a(3n-2)=bn+c(3n-1)$. Thus we are looking for $a,b,c\in\mathbb N$, $a\neq 0$ such that $(3a-b-3c)n=2a-c$. Hence we must solve 
\[
\begin{cases}
3a-b-3c=k,\\
2a-c=kn,
\end{cases}
\]
with $k\in \mathbb N$. We get the parametrized solutions $a=\frac{kn+c}2$ and $b=\frac{k(3n-2)-3c}{2}$. For $k=0$ there is no nonnegative solution to the equations, and thus the least possible $a$ is reached for $k=1$, and since $n$ is odd, in order to get $a\in\mathbb N$, $c$ cannot be zero. Hence the least possible value of $a$ is $\frac{n+1}2$, and then $b=\frac{3n-5}2\in \mathbb N$ and $c=1$. So we already have two relations among the generators:
\[
\begin{array}{rcl}
2(3n-1) & = & 3n+1(3n-2),\\
\frac{n+1}2 (3n-2)& = &\frac{3n-5}2 n + 1(3n-1).
\end{array}
\]
In light of \cite[Lemma 10.19]{ns}, the third relation can be obtained from these two by ``adding'' them together:
\[
\frac{3n+1}2 n = \frac{n-1}2 (3n-2) + 1(3n-1).
\]
Therefore, a minimal presentation for $T$ is 
\begin{equation}\label{minimalpresentation}
\left\{ ((0,0,2),(3,1,0)), \left(\left(0,\frac{n+1}2,0\right),\left(\frac{3n-5}2,0,1\right)\right),\left(\left(\frac{3n+1}2,0,0\right),\left(0,\frac{n-1}2,1\right)\right)\right\}.
\end{equation}
From \cite[Proposition 2.20 and Lemma 10.20]{ns}, we obtain that the set of pseudo-Frobenius numbers of $S$, $\mathrm{PF}(T)=\{ z\in \mathbb Z\mid z+T\setminus\{0\} \subseteq T\}$, is
\[
\mathrm{PF}(T)=\left\{\frac{3n-7}2n+1, \frac{3n-7}2n+2\right\}.
\]
In particular $\F(T)=\frac{3n-7}2n+2$ and $\F(T)-1\not\in T$. This implies that $\F(S)=\F(T)-1$ and  as a consequence of this, $\max\Ap(S,\F(T))=2\F(T)-1=n(3n-7)+3$ (\cite[Proposition 2.12]{ns}).

\subsection{Factorizations of the elements of the Ap\'ery set}
In this section we describe what are the factorizations of the elements of $\Ap(S,\F(T))$. We are going to use extensively the fact that $\Ap(S,\F(T))$ is a closed set, as it has been remarked before. Also every element $s$ in $\Ap(S,\F(T))$ is in $T$, and thus we identify the set $\mathsf Z(s)$ with a subset of $\mathbb N^3$; indeed $\mathsf Z(s)$ is in one-to-one correspondence with $\phi^{-1}(s)$.

\begin{lemma}\label{fact-en-ap}
Let $s\in \Ap(S,\F(T))$. There exists exactly one factorization $(x,y,z)\in\mathsf Z(s)$ such that $z<2,$ $y<\frac{n+1}{2}$ and $x<\frac{3n-1}{2}$.
\end{lemma}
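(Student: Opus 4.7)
The claim splits naturally into uniqueness and existence.

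For uniqueness, suppose $(x_1,y_1,z_1)$ and $(x_2,y_2,z_2)$ are two triples satisfying the stated bounds with a common image $s$. Their difference lies in the $\mathbb{Z}$-lattice generated by the three relator vectors $r_1=(-3,-1,2)$, $r_2=(-\tfrac{3n-5}{2},\tfrac{n+1}{2},-1)$, $r_3=(\tfrac{3n+1}{2},-\tfrac{n-1}{2},-1)$ coming from the minimal presentation \eqref{minimalpresentation}, which satisfy $r_1+r_2+r_3=0$ in $\mathbb{Z}^3$. So the difference can be written as $k_1 r_1+k_2 r_2$, and splitting by $z_1-z_2=2k_1-k_2\in\{-1,0,1\}$ reduces to a brief case analysis: when $z_1=z_2$ (so $k_2=2k_1$), $y_1-y_2=nk_1$ together with $|y_i|\le\tfrac{n-1}{2}$ forces $k_1=0$; when $|z_1-z_2|=1$, the bound on $y$ pins $k_1=\pm 1$ and then $|x_1-x_2|$ computes to $\tfrac{3n+1}{2}$, contradicting $0\le x_i\le\tfrac{3n-3}{2}$.

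For existence, I would use the Apéry decomposition of $T$ with respect to $n$. Let $r=s\bmod n$ and $w_r=\min\{t\in T:t\equiv r\pmod n\}$; then $s=xn+w_r$ for some $x\ge 0$. An easy computation from the generators $\{n,3n-2,3n-1\}$ yields an explicit formula for $w_r$, namely $w_r=\tfrac{n-r}{2}(3n-2)$ when $r$ is odd, $w_r=\tfrac{n-r-1}{2}(3n-2)+(3n-1)$ when $r$ is even and nonzero, and $w_0=0$. In every case the canonical factorization $(0,y_r,z_r)$ of $w_r$ already has $y_r\le\tfrac{n-1}{2}$ and $z_r\in\{0,1\}$, so $(x,y_r,z_r)$ is a factorization of $s$ meeting the bounds on $y$ and $z$ automatically.

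The remaining bound $x\le\tfrac{3n-3}{2}$ is exactly where the hypothesis $s\in\Ap(S,\F(T))$ is used. The condition $s-\F(T)\notin S$ translates (for $s\ge\F(T)$) to $s-\F(T)<w_{r'}$ with $r'=(r-2)\bmod n$, so $s\le\F(T)+w_{r'}-1$; substituting the explicit $w_r, w_{r'}$ and using $w_{r'}-w_r=3n-2$ (the generic case, valid whenever $r,r'$ share parity) gives $x\le\lfloor(3n^2-n-2)/(2n)\rfloor=\tfrac{3n-3}{2}$, exactly tight. Three residues need separate attention: $r=1$ gives the looser bound $x\le 1$; $r=2$ is vacuous because a direct calculation shows $w_2=\F(T)+n$, so every $s\in T$ with $s\equiv 2\pmod n$ has $s-\F(T)\in T$ and is therefore not in $\Ap(S,\F(T))$; and for $r=0$ the first multiple of $n$ beyond the bound, $s=\tfrac{n(3n-1)}{2}$, is likewise excluded since $s-\F(T)=3n-2\in T$.

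The main obstacle is the combinatorial bookkeeping: tracking the parity of $r$ in the formula for $w_r$, handling the edge residues $\{0,1,2\}$ by hand, and confirming that the bound $x\le\tfrac{3n-3}{2}$ holds for every generic residue, where it is typically saturated and leaves no slack in the estimate.
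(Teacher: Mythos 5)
Your argument is correct, but it follows a genuinely different route from the paper's. For uniqueness the paper works combinatorially: any two factorizations of $s\in T$ are connected by a chain of trades from the minimal presentation \eqref{minimalpresentation}, and one checks that every admissible trade applied inside the box $z<2$, $y<\frac{n+1}2$, $x<\frac{3n-1}2$ pushes a coordinate permanently out of range. You instead pass to the kernel lattice spanned by the relator differences $r_1,r_2$ (legitimate, since the presentation generates the kernel congruence, or directly because the $2\times2$ minors of $(r_1;r_2)$ are $\pm n,\pm(3n-2),\pm(3n-1)$, with gcd $1$) and solve the integer--linear constraints; your case analysis closes correctly, the only surviving case $k_1=k_2=\pm1$ being killed by $|x_1-x_2|=\frac{3n+1}2>\frac{3n-3}2$. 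This is arguably cleaner and more self-contained than the chain argument. For existence the paper rewrites an arbitrary factorization via the presentation to force $z<2$ and $y<\frac{n+1}2$, and then gets the bound on $x$ essentially for free from closedness of the Ap\'ery set: if some factorization had $x\ge\frac{3n-1}2$ then $\frac{3n-1}2 n\le_S s$, yet $\frac{3n-1}2 n-\F(T)=3n-2\in S$. You instead compute $\Ap(T,n)$ explicitly and read the normal form off $w_r$; your residue-by-residue bound on $x$ checks out (including the special residues $0,1,2$, and $w_2=\F(T)+n$ is correct), and in effect it re-derives the shape of the set $F$ of Lemma~\ref{wholeApery} ahead of schedule, at the cost of noticeably more bookkeeping than the paper's one-line closedness argument. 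One harmless imprecision: you translate $s-\F(T)\notin S$ as $s-\F(T)<w_{r'}$, which only encodes $s-\F(T)\notin T$ (it misses the exclusion of $s=2\F(T)$); since you only need an upper bound valid on $\Ap(S,\F(T))$, proving it on this superset is fine.
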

\begin{proof}
We can use the minimal presentation of $T$ to obtain one factorization of $s$ with the third coordinate less than two and the second less than $\frac{n+1}{2}$. Notice that as $n\frac{3n+1}2- \F(T)= 4n-2\in S$, we have $n\frac{3n+1}2\not\in\Ap(S,\F(T))$, and so the third relation is never used on the factorizations of $s$. As $n\frac{3n-1}2- \F(T)= 3n-2\in S$, the first coordinate must be less than $\frac{3n-1}2$.

Now assume that there is another $(x',y',z')\in \mathsf Z(s)$ with $z'<2$, $x'<\frac{3n-1}2$ and $y<\frac{n+1}2$. From the definition of minimal presentation there should be a chain of reductions going from $(x,y,z)$ to $(x',y',z')$ by using the relations in the minimal presentation. We already know that the third relation cannot be used. Also as $z,z'<2$ and $y,y'<\frac{n+1}2$, the only possibility is that either $(3,1,0)< (x,y,z)$ or $\left( \frac{3n-5}2,0,1\right)< (x,y,z)$. If $\left( \frac{3n-5}2,0,1\right)< (x,y,z)$, then $z=1$ and $\left( x-\frac{3n-5}2,y+\frac{n+1}2,0\right)\in\mathsf Z(s)$. Also $x<\frac{3n-1}2$, whence $x-\frac{3n-5}2<\frac{3n-1}2-\frac{3n-5}2=2$. So to meet this new factorization we cannot apply the first relation, which means that we can only, eventually, use the second one obtaining always factorizations with second coordinate greater than $\frac{n+1}2$. Assume now that $(3,1,0)<(x,y,z)$. Then $(x-3,y-1,z+2)\in \mathsf Z(s)$. Again, as $x<\frac{3n-1}2$, $x-3<\frac{3n-5}2$, and $y-1<\frac{
n+1}2$. So we cannot apply here the second relation. This means that we could only apply here the first one, obtaining in any case factorizations with last coordinate greater than two.
\end{proof}

We can define an injective mapping from $\Ap(S,\F(T))$ to $\mathbb N^3$ that assigns to every $s\in \Ap(S,\F(T))$ the only factorization $(x,y,z)$ fulfilling the conditions of Lemma \ref{fact-en-ap}. Let us denote this map by \[\mathrm{nf}:\Ap(S,\F(T))\to \mathbb N^3.\]
For $s\in\Ap(S,\F(T))$, we will say that $\mathrm{nf}(s)$ is the \emph{normal form} of $s$. As usual, given $X\subseteq \Ap(S,\F(T))$, we write $\mathrm{nf}(X)=\{\mathrm{nf}(s)\mid s\in X\}$.

\begin{lemma}\label{endings}
Under the standing hypothesis, 
\begin{multline*}\left\{\left(\frac{3n-3}{2}\right)n+\left(\frac{n-3}{2}\right)(3n-2),\quad n+\left(\frac{n-1}{2}\right)(3n-2),\right.\\ \left(\frac{3n-5}{2}\right)n+\left(\frac{n-5}{2}\right)(3n-2)+(3n-1),\\
\left.\left(\frac{3n-3}{2}\right)n+\left(\frac{n-7}{2}\right)(3n-2)+(3n-1)  
 \right\}\subseteq \Ap(S,\F(T)).
\end{multline*}
\end{lemma}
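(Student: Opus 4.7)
The approach is to verify directly, for each of the four listed elements $v$, that $v \in S$ and that $v - \F(T) \notin S$; these two conditions together are exactly $v \in \Ap(S,\F(T))$. Since $S = T \cup \{\F(T)\}$, the second condition splits into $v - \F(T) \neq \F(T)$ (trivial to read off from the stated values) and $v - \F(T) \notin T$.

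Denote the four elements by $v_1, v_2, v_3, v_4$ in the order listed. For $v_1$, a short expansion gives $v_1 = 3n^2 - 7n + 3 = n(3n-7)+3 = 2\F(T)-1$, so the membership $v_1 \in \Ap(S,\F(T))$ is immediate from the identity $\max \Ap(S,\F(T)) = 2\F(T)-1$ just established in the paper.

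For the other three elements, the core step is the algebraic identifications
\begin{align*}
v_2 - \F(T) &= 2n-1,\\
v_3 - \F(T) &= \F(T) - n,\\
v_4 - \F(T) &= \F(T) - (3n-2).
\end{align*}
To rule each right-hand side out of $T$, I would argue as follows. For $2n-1$: since $2n-1 < 3n-2$, any representation in $T$ could only use the generator $n$, contradicting $n \nmid 2n-1$. For $\F(T) - n$ and $\F(T) - (3n-2)$: if either of these lay in $T$, then adding back the subtracted generator (which is itself in $T$) would force $\F(T) \in T$, contradicting $\F(T) = \max(\mathbb Z \setminus T)$. In all three cases the difference is visibly distinct from $\F(T)$, so the element is not in $S$. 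Membership of $v_2, v_3, v_4$ in $T$ is clear from the displayed factorizations, the only caveat being $v_4$ at $n=5$, where the coefficient $\frac{n-7}{2}$ would be negative; here one evaluates the integer $v_4 = 31$ and observes $31 = 5 + 2 \cdot 13 \in T$ directly.

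There is no deep obstacle in this proof: the real content is pinpointing each difference $v_i - \F(T)$ in one of the three convenient forms above, after which non-membership in $T$ follows from the additive closure of $T$ together with the definition of the Frobenius number.
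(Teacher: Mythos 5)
Your proposal is correct and follows essentially the same route as the paper: compute each difference $v_i-\F(T)$ (getting $\F(T)-1$, $2n-1$, $\F(T)-n$, $\F(T)-(3n-2)$ respectively) and observe it is not in $S$. You actually supply details the paper leaves implicit — the justification that $2n-1\notin T$, the Frobenius-number argument for $\F(T)-n$ and $\F(T)-(3n-2)$, and the check that $v_4\in T$ at $n=5$ where the displayed coefficient $\frac{n-7}{2}$ is negative — all of which are sound.
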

\begin{proof}
Clearly, $\left(\frac{3n-3}{2}\right)n+\left(\frac{n-3}{2}\right)(3n-2)\in S$, and $\left(\frac{3n-3}{2}\right)n+\left(\frac{n-3}{2}\right)(3n-2)-\mathrm F(T)=\mathrm F(T)-1\not\in S$.

As $n+\left(\frac{n-1}{2}\right)(3n-2)-\F(T)=2n-1\not\in S$, it easily follows that $n+\left(\frac{n-1}{2}\right)(3n-2)$ is in the Apéry set.

From identity $\left(\frac{3n-5}{2}\right)n+\left(\frac{n-5}{2}\right)(3n-2)+(3n-1)-\mathrm F(T)=\mathrm F(T)-n$, it follows the third element also belongs to this Ap\'ery set.

Finally, the last element in the list is in the Apéry since $\left(\frac{3n-3}{2}\right)n+\left(\frac{n-7}{2}\right)(3n-2)+(3n-1)-\F(T)=\F(T)-(3n-2)\not\in S$.

\end{proof}


Let 
\begin{multline*}
F=\left\{(x,y,0)\in \mathbb N^3 \mid x\le \frac{3n-3}2, y\leq\frac{n-3}2\right\}\cup \left\{(x,y,0)\in \mathbb N^3\mid x\le 1, y=\frac{n-1}2\right\} \\ 
 \cup \left\{(x,y,1)\in \mathbb N^3 \mid x\leq \frac{3n-3}2, y\leq\frac{n-7}2\right\}\cup \left\{(x,y,1)\in\mathbb N^3\mid x\leq \frac{3n-5}2, y=\frac{n-5}2\right\}.
\end{multline*}

We will denote respectively $F_1$, $F_2$, $F_3$ and $F_4$ the four sets in the above representation of $F$. 

\begin{lemma}\label{wholeApery}
The map $\mathrm{nf}:\Ap(S,\F(T))\to  F$ is a bijection.
\end{lemma}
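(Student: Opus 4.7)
The plan combines three ingredients: injectivity of $\mathrm{nf}$, which is immediate from Lemma \ref{fact-en-ap}; a cardinality count showing $|F|=|\Ap(S,\F(T))|$; and a containment $F\subseteq \mathrm{nf}(\Ap(S,\F(T)))$ obtained from Lemma \ref{endings} together with the closedness of Ap\'ery sets under $\le_S$. First, Lemma \ref{fact-en-ap} gives that every $s\in\Ap(S,\F(T))$ has exactly one factorization $(x,y,z)$ with $z<2$, $y<\tfrac{n+1}{2}$ and $x<\tfrac{3n-1}{2}$, which is by definition $\mathrm{nf}(s)$, so $\mathrm{nf}$ is well-defined and injective.

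For the cardinality count, the four pieces $F_1,F_2,F_3,F_4$ are pairwise disjoint: $F_1\cup F_2$ lives in $z=0$ while $F_3\cup F_4$ lives in $z=1$, and within each level the values of $y$ separate the pieces. Summing $|F_1|=\tfrac{(3n-1)(n-1)}{4}$, $|F_2|=2$, $|F_3|=\tfrac{(3n-1)(n-5)}{4}$ and $|F_4|=\tfrac{3n-3}{2}$ gives $|F|=\tfrac{(3n-1)(n-3)}{2}+\tfrac{3n+1}{2}=\tfrac{3n^2-7n}{2}+2=\F(T)=|\Ap(S,\F(T))|$.

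For the containment, each $F_i$ has a unique componentwise maximum, and the four maxima are precisely the coordinate vectors of the four Ap\'ery elements displayed in Lemma \ref{endings}, namely $(\tfrac{3n-3}{2},\tfrac{n-3}{2},0)$, $(1,\tfrac{n-1}{2},0)$, $(\tfrac{3n-3}{2},\tfrac{n-7}{2},1)$ and $(\tfrac{3n-5}{2},\tfrac{n-5}{2},1)$. Given any $(x,y,z)\in F$, I would pick the maximum $(x',y',z')$ of the piece containing it; then $(x,y,z)\le(x',y',z')$ componentwise, and setting $s=xn+y(3n-2)+z(3n-1)$ and $s'=x'n+y'(3n-2)+z'(3n-1)$ we have $s'-s\in T\subseteq S$, so $s\le_S s'\in\Ap(S,\F(T))$. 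Closedness of $\Ap(S,\F(T))$ then yields $s\in\Ap(S,\F(T))$; since $(x,y,z)$ is a factorization of $s$ lying in the normal-form cube, uniqueness in Lemma \ref{fact-en-ap} gives $(x,y,z)=\mathrm{nf}(s)$.

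Combining injectivity with $F\subseteq \mathrm{nf}(\Ap(S,\F(T)))$ and the cardinality match forces $\mathrm{nf}$ to take values in $F$ and to be a bijection. The delicate point I expect is just the arithmetic bookkeeping on $|F|$: the four pieces have ranges depending on parities that have to fit together exactly, and any off-by-one slip would spoil the count. The geometric content is very clean, because Lemma \ref{endings} supplies exactly the four corners of $F$ and closedness propagates them down to the whole of $F$, so no case-by-case exclusion of forbidden boundary factorizations (such as $(x,\tfrac{n-1}{2},0)$ with $x\ge 2$, or $(x,\tfrac{n-3}{2},1)$) is needed; the counting argument absorbs them automatically.
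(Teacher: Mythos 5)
Your proof is correct and follows essentially the same route as the paper: use Lemma \ref{endings} to identify the four componentwise maxima of $F_1,\dots,F_4$ as Ap\'ery elements, propagate down by closedness under $\le_S$, invoke the uniqueness in Lemma \ref{fact-en-ap} to see that each point of $F$ is a normal form (so the assignment is injective), and close the argument with the cardinality count $\#F=\F(T)=\#\Ap(S,\F(T))$. Your arithmetic for $\#F$ agrees with the paper's.
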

\begin{proof}
By using Lemma \ref{endings} and the fact that $\Ap(S,\F(T))$ is closed, we have $xn+y(3n-2)\in \Ap(S,\F(T))$ for all $(x,y,0)\in F_1\cup F_2$. Also, the elements $xn+y(3n-2)+(3n-1)$ belong to the Ap\'ery $\Ap(S,\F(T))$, with $(x,y,1)\in F_3\cup F_4$. 

All the elements we have obtained so far are different by Lemma \ref{fact-en-ap}. Counting them all, we get 
\[
\left(\frac{3n-1}2\right)\left(\frac{n-1}2\right)+2+\left(\frac{3n-1}2\right)\left(\frac{n-5}2\right)+\frac{3n-3}2=\frac{3n^2-7n}2+2=\F(T).
\]
And as the cardinality of $\Ap(S, \F(T))$ is precisely $\F(T)$, we obtain the desired result.
\end{proof}

Observe that $F$ is an \textsf L-shape associated with $\Ap(S,\F(T))$. 


%
\subsection{Computing the number of factorizations of elements in $\Ap(S,\F(T))$}

\begin{lemma}\label{lem:unafact}
An element $s\in \Ap(S,\F(T))$ has only one factorization if and only if $\mathrm{nf}(s)$ satisfies one of the following disjoint conditions:
\begin{enumerate}[1)]
\item $\mathrm{nf}(s)=(x,0,0)$ with $0\leq x\leq \frac{3n-3}2$,
\item $\mathrm{nf}(s)=(x,0,1)$ with $0\leq x\leq \frac{3n-7}{2}$,
\item $\mathrm{nf}(s)=(2,y,0)$ with $1\leq y\leq \frac{n-3}{2}$,
\item $\mathrm{nf}(s)=(x,y,0)$ with $0\leq x\leq 1$ and $1\leq y\leq \frac{n-1}{2}$,
\item $\mathrm{nf}(s)=(x,y,1)$ with $0\leq x\leq 2$ and $1\leq y\leq \frac{n-5}{2}$.
\end{enumerate}
\end{lemma}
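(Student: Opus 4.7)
The plan is to exploit the minimal presentation of $T$ displayed in (\ref{minimalpresentation}) together with the standard fact that two vectors in $\mathbb N^3$ are identified by $\phi$ if and only if they are connected by a chain of single-step rewritings along the three generating relations. Consequently $\mathsf Z(s)=\{\mathrm{nf}(s)\}$ if and only if no relation at all can be applied to $\mathrm{nf}(s)$; conversely, if any single relation is applicable, it already produces a second, distinct factorization of $s$. So the proof reduces to a clean case analysis of which of the three relations can act on a tuple in normal form.

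I would first discard the third relation entirely. As observed in the proof of Lemma \ref{fact-en-ap}, every factorization of an element of $\Ap(S,\F(T))$ has first coordinate strictly below $\frac{3n-1}{2}$, because $n\cdot\frac{3n-1}{2}-\F(T)=3n-2\in S$. Hence no factorization of $s$ dominates $\bigl(\frac{3n+1}{2},0,0\bigr)$, and the opposite rewriting $\bigl(0,\frac{n-1}{2},1\bigr)\to\bigl(\frac{3n+1}{2},0,0\bigr)$ would also push the first coordinate above that threshold; so the third relation is globally inapplicable on factorizations of elements of $\Ap(S,\F(T))$.

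Next I would examine the first two relations at $\mathrm{nf}(s)=(x,y,z)$. The normal-form inequalities $z<2$ and $y<\frac{n+1}{2}$ kill the directions $(0,0,2)\to(3,1,0)$ and $\bigl(0,\frac{n+1}{2},0\bigr)\to\bigl(\frac{3n-5}{2},0,1\bigr)$, leaving only the reduction $(3,1,0)\to(0,0,2)$, which demands $x\geq 3$ and $y\geq 1$, and $\bigl(\frac{3n-5}{2},0,1\bigr)\to\bigl(0,\frac{n+1}{2},0\bigr)$, which demands $x\geq\frac{3n-5}{2}$ and $z\geq 1$. Therefore $\#\mathsf Z(s)=1$ is equivalent to the conjunction of $\bigl(x\leq 2\ \text{or}\ y=0\bigr)$ and $\bigl(x\leq\frac{3n-7}{2}\ \text{or}\ z=0\bigr)$.

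Finally I would intersect this Boolean condition with the explicit image of $\mathrm{nf}$ described in Lemma \ref{wholeApery}, splitting on $z$. For $z=0$ the image is $F_1\cup F_2$: the subcase $y=0$ returns item (1); the subcase $x=2$ with $y\geq 1$ sits inside $F_1$ and returns item (3); and the subcase $x\leq 1$ with $y\geq 1$ returns item (4) after absorbing the extra slice $y=\frac{n-1}{2}$ contributed by $F_2$. For $z=1$ the image is $F_3\cup F_4$, and since $\frac{3n-7}{2}\geq 4>2$ for $n\geq 5$ the clause $x\leq\frac{3n-7}{2}$ is automatic whenever $x\leq 2$; this delivers item (5) for $y\geq 1$, merging the ranges $y\leq\frac{n-7}{2}$ from $F_3$ with $y=\frac{n-5}{2}$ from $F_4$, and item (2) for $y=0$. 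The only potentially delicate step is the first one, the global inapplicability of the third relation; once that is settled, the remainder is disjoint-casework bookkeeping.
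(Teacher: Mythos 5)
Your proposal is correct and follows essentially the same route as the paper: both reduce the question to deciding which of the six tuples in the minimal presentation \eqref{minimalpresentation} are dominated by $\mathrm{nf}(s)$, discard the third relation together with the two ``upward'' rewritings, and then read off the five cases from the description of $F$ in Lemma~\ref{wholeApery}. Your explicit justification that the third relation is globally inapplicable is, if anything, slightly cleaner than the paper's treatment of the case $y=0$, $z=0$, where the constraint $x<\frac{3n+1}{2}$ is invoked but is already subsumed by membership in $F_1$.
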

\begin{proof}
Notice that we are looking for elements in $F$ such that they are not bigger than or equal to (with respect to the usual partial ordering in $\mathbb N^3$) any of the six factorizations involved in the minimal presentation of $S$, \eqref{minimalpresentation}. So the sufficiency is clear.

As $((0,0,2),(3,1,0))$ is in the minimal presentation, we have that $z<2$. We begin with the case $y=0$.
 \begin{itemize}
 \item If $z=0$, as we have elements with only one factorization, the factorization $(x,0,0)$  must not be bigger than or equal to $(\frac{3n+1}{2},0,0)$. If this were not the case, we could apply the third element of the minimal presentation. Then we are choosing elements in $F_1$, so only can take $x\leq\frac{3n-3}{2}$. 
 \item If $z=1$, the factorization $(x,0,1)$ must be located below $(\frac{3n-5}{2},0,1)$ since, otherwise, $\left(x-\frac{3n-5}2,\allowbreak \frac{n+1}2,\allowbreak 0\right)$ would be another factorization of $\mathrm{nf}(s)$ (in this case we are lying in $F_3$, but in this case this is not relevant).
\end{itemize}
Now we look at the case $y>0$. In this setting we have $x<3$. We distinguish either $z=0$ or $z=1$. For $z=0$ we get two subcases.
\begin{itemize}
 \item If $x=2$, these elements are in $F_1$, and consequently we have $y\leq\frac{n-3}2$.
 \item If $x\leq 1$, we are choosing elements in $F_2$, so we have $y\leq\frac{n-1}{2}$. 
 \end{itemize}
Finally, if $y\geq 1$ and $z=1$, we are taking elements in $F_3\cup F_4$, whence $y\leq\frac{n-5}2$.
\end{proof}

 \begin{corollary}
 There exist $6n-13$ elements in $\Ap(S,\F(T))$ with only one factorization.
 \end{corollary}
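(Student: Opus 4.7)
The plan is to apply Lemma~\ref{lem:unafact} together with Lemma~\ref{wholeApery}: the former classifies, via the normal form, exactly those elements of $\Ap(S,\F(T))$ with a unique factorization, while the latter ensures that $\mathrm{nf}$ is a bijection onto $F$. Hence the count we want equals the number of normal forms $(x,y,z)$ satisfying at least one of the five conditions listed in Lemma~\ref{lem:unafact}.

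The first step is to check that the five conditions are pairwise disjoint, so that no normal form is counted twice. This is immediate from the coordinate constraints: (1) is characterized by $y=z=0$; (2) by $y=0$, $z=1$; (3) by $z=0$, $y\geq 1$, $x=2$; (4) by $z=0$, $y\geq 1$, $x\leq 1$; and (5) by $z=1$, $y\geq 1$. Each pair of conditions is separated by at least one of these parameters.

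The second step is to count each family using the ranges from Lemma~\ref{lem:unafact}: condition (1) gives $\tfrac{3n-1}{2}$ triples, (2) gives $\tfrac{3n-5}{2}$, (3) gives $\tfrac{n-3}{2}$, (4) gives $2\cdot \tfrac{n-1}{2}=n-1$, and (5) gives $3\cdot\tfrac{n-5}{2}=\tfrac{3n-15}{2}$. Summing and simplifying yields $6n-13$.

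There is no real obstacle here: once Lemma~\ref{lem:unafact} is in hand the argument is pure bookkeeping. The only mild care point is that $n$ is odd and $\geq 5$, which ensures all half-integer bounds are genuine nonnegative integers (in particular family (5) is empty precisely when $n=5$, and the total $6\cdot 5-13=17$ still agrees with the direct count from families (1)--(4)).
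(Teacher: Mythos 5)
Your proposal is correct and follows the same route as the paper: the paper's proof is exactly the bookkeeping sum $\frac{3n-1}2+\frac{3n-5}2+\frac{n-3}2+2\cdot\frac{n-1}2+3\cdot\frac{n-5}2=6n-13$ over the five (already disjoint) cases of Lemma~\ref{lem:unafact}. Your extra checks of disjointness and of the boundary case $n=5$ are harmless additions, not a different argument.
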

 \begin{proof}
 We only need to sum the elements in each of the five items from Lemma~\ref{lem:unafact}:
 \[
 \frac{3n-1}2+\frac{3n-5}2+\frac{n-3}2+2\frac{n-1}2+3\frac{n-5}2=6n-13.\qedhere
 \]
 \end{proof}
 
Define $M_i$ as the set of elements in $\Ap(S,\F(T))$ having $i$ factorizations, i.e.
\[
M_i=\{ s\in \Ap(S,\F(T))\mid~\#\mathsf Z(s)=i\}.
\]

\begin{lemma}
Let $i$ be a positive integer such that $2\leq i\leq\frac{n-3}2$. An element $s\in M_i$ if and only if $\mathrm{nf}(s)=(x,y,z)$ satisfies one of the following disjoint conditions:
\begin{enumerate}[1)]\label{casesgeneral}
\item $y=i-2,$ $z=1$ and $\frac{3n-5}2\leq x \leq \frac{3n-3}2$,
\item $y=i-1$, $z=0$ and $3(i-1)\leq x\leq \frac{3n-3}2$,
\item $y=i-1$, $z=1$ and $3(i-1)\leq x\leq \frac{3n-7}2$,
\item $i\leq y\leq\frac{n-3}2$, $z=0$ and $3(i-1)\leq x \leq 3i-1$, 
\item $i\leq y\leq\frac{n-5}2$, $z=1$ and $3(i-1)\leq x \leq 3i-1$. 
\end{enumerate}
\end{lemma}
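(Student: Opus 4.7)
My approach will mirror that of Lemma~\ref{lem:unafact}. Starting from the normal form $\mathrm{nf}(s)=(x,y,z)\in F$, I would enumerate the other factorizations of $s$ by chaining the relations in the minimal presentation~\eqref{minimalpresentation}. As noted in the proof of Lemma~\ref{fact-en-ap}, the third relation is never applicable in $\Ap(S,\F(T))$, so every kernel move reduces to the first relation (exchanging $(3,1,0)$ for $(0,0,2)$) or the second (exchanging $(0,\tfrac{n+1}{2},0)$ for $(\tfrac{3n-5}{2},0,1)$, or conversely).

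The core step is to show that, for any $(x,y,z)\in F$, the factorizations of $s$ are exactly: (i)~the ``vertical'' family $(x-3k,\,y-k,\,z+2k)$, $k=0,1,\dots,\min(\lfloor x/3\rfloor,y)$, obtained by iterating the first relation; together with (ii)~one additional factorization $\bigl(x-\tfrac{3n-5}{2},\,y+\tfrac{n+1}{2},\,z-1\bigr)$, obtained by applying the inverse of the second relation, available exactly when $z\ge 1$ and $x\ge\tfrac{3n-5}{2}$. Hence
\[
\#\mathsf Z(s)=\min(\lfloor x/3\rfloor,y)+1+\delta,
\]
with $\delta=1$ when condition~(ii) applies and $\delta=0$ otherwise. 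The upper bound $x\le\tfrac{3n-3}{2}$ built into $F$ is precisely what forbids any further move using the second relation, on its own or combined with~(i), from staying in $\mathbb N^{3}$.

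Solving $\#\mathsf Z(s)=i$ inside each of $F_{1},\dots,F_{4}$ then produces the five disjoint cases. When $\delta=1$ (which forces $z=1$ and $x\in\{\tfrac{3n-5}{2},\tfrac{3n-3}{2}\}$), the equation $y+2=i$ gives case~1. When $\delta=0$, the equation $\min(\lfloor x/3\rfloor,y)=i-1$ bifurcates into the ``$y=i-1$'' situations (cases~2 and~3, depending on $z\in\{0,1\}$, with $3(i-1)\le x$ and the extra restriction $x\le\tfrac{3n-7}{2}$ when $z=1$ to preserve $\delta=0$), and the ``$y\ge i$'' situations (cases~4 and~5, again parameterised by $z$), where $\lfloor x/3\rfloor=i-1$ forces $3(i-1)\le x\le 3i-1$. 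The hypothesis $i\le\tfrac{n-3}{2}$ keeps every resulting triple inside $F$.

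The main obstacle is the rigorous exclusion of ``exotic'' factorizations of the form $\mathrm{nf}(s)+\alpha r_{1}+\beta r_{2}$, where $r_{1}=(3,1,-2)$ and $r_{2}=\bigl(-\tfrac{3n-5}{2},\tfrac{n+1}{2},-1\bigr)$, for $\beta\notin\{0,1\}$. For $\beta\ge 2$, the first-coordinate inequality $x-\tfrac{3n-5}{2}\beta+3\alpha\ge 0$ combined with $x\le\tfrac{3n-3}{2}$ forces $\alpha\ge 1$, contradicting $z-2\alpha-\beta\ge 0$ since $z\le 1$. For $\beta\le -1$, the second-coordinate inequality forces $\alpha\ge\tfrac{n+1}{2}-y\ge 3$ (using $y\le\tfrac{n-5}{2}$ whenever $i\ge 2$), again incompatible with $z\le 1$. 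These inequality arguments, though elementary, are where all the bookkeeping lives.
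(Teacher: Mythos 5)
Your proposal is correct and follows essentially the same route as the paper. The paper's proof rests on exactly the two structural facts you isolate: starting from $\mathrm{nf}(s)\in F$ only the first two relations of the presentation can act, the first one generates the ``vertical'' chain $(x-3k,y-k,z+2k)$, and the second one contributes at most one extra factorization, precisely when $z=1$ and $x\ge\frac{3n-5}{2}$; the five cases then drop out of $\min(\lfloor x/3\rfloor,y)+1+\delta=i$ exactly as you describe. The only real difference is how completeness of this list is justified: the paper argues via the tree of reductions rooted at $\mathrm{nf}(s)$ (its ``two main ideas'', using that factorizations are connected through the presentation), while you parametrize $\mathsf Z(s)$ by the kernel lattice $\mathbb Z r_1+\mathbb Z r_2$ and exclude the other lattice points by inequalities; both are legitimate. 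One small repair is needed in your $\beta\le-1$ exclusion: the bound $y\le\frac{n-5}{2}$ is not available on all of $F$ (elements of $F_1\cup F_2$ reach $y=\frac{n-3}{2}$ or $y=\frac{n-1}{2}$), and invoking $i\ge2$ at that point is circular, since the count formula is what the argument is establishing. A clean fix is to combine the second- and third-coordinate constraints, which give $\frac{n+1}{2}|\beta|-y\le\alpha\le\frac{z+|\beta|}{2}$ and hence $n|\beta|\le 2y+z\le n-1$ for every element of $F$, ruling out $\beta\le-1$ outright (and the analogous combination also disposes of $|\beta|\ge2$ uniformly).
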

\begin{proof}
To construct the set of all factorizations of an element $s\in\Ap(S,\F(T))$, we can start with $\mathrm{nf}(s)=(x,y,z)$, and then, by using the elements of the minimal presentation, as times as possible, we can find the remaining factorizations for $s$ (this is a consequence of \cite[Lemma~8.4]{ns}). As we will take elements in $\Ap(S,\F(T))$, then, as we have seen in the proof of Lemma~\ref{fact-en-ap}, only $((0,0,2),(3,1,0))$ and $\left(\left(0,\frac{n+1}2,0\right),\left(\frac{3n-5}2,0,1\right)\right)$ can be used in this construction. We denote them as $\mathrm{mp}_1$ and $\mathrm{mp}_2$, respectively. We will refer to operation $\mathrm{mp}_i$ when we obtain a new factorization of an element by using the relation $\mathrm{mp}_i$. For instance, from the factorization $(0,0,2)$, by using $\mathrm{mp}_1$, we obtain $(3,1,0)$.

Starting with $(x,y,z)=\mathrm{nf}(s)$, the operation $\mathrm{mp}_1$ only can be applied by subtracting $(3,1,0)$ and adding $(0,0,2)$, since the elements in $F$ have $z\leq 1$. Analogously, operation $\mathrm{mp}_2$ only can be applied by subtracting $\left(\frac{3n-5}2,0,1\right)$ and adding $\left(0,\frac{n+1}2,0\right)$; we can not subtract $\left(0,\frac{n+1}2,0\right)$ because the second coordinate of the elements in $F$ are less than or equal to $\frac{n-1}{2}$. Also, the operation $\mathrm{mp}_2$ can only be applied to elements on $F$ with the first coordinate $x=\frac{3n-3}2$ or $x=\frac{3n-5}2$ and the third coordinate $z=1$.

We are going to see that in the tree of factorizations obtained by applying $\textrm{mp}_1$ and $\textrm{mp}_2$, whenever we apply $\mathrm{mp}_2$, we encounter a leaf, that is, we cannot obtain new factorizations following this path. Actually the tree (rooted in $\mathrm{nf}(s)$) would have the following two possible shapes.

\begin{center}
\begin{tikzpicture}[y=.1cm, x=.1cm,font=\sffamily]
\draw (0,0) -- (-10,-10) node [near end,above=11pt,fill=white] {\tiny $\mathrm{mp}_2$};
\draw (0,0) -- (10,-10) node [near end,above=11pt,fill=white] {\tiny $\mathrm{mp}_1$}; 

\draw[dashed] (10,-10) -- (20,-20);

\draw (20,-20) -- (30,-30) node [near end,above=11pt,fill=white] {\tiny $\mathrm{mp}_1$};

\filldraw[fill=black!40,draw=black!80] (0,0) circle (3pt)    node[anchor=south] {$\mathrm{nf}(s)$};

\filldraw[fill=black!40,draw=black!80] (-10,-10) circle (3pt);    
\filldraw[fill=black!40,draw=black!80] (10,-10) circle (3pt);    
\filldraw[fill=black!40,draw=black!80] (20,-20) circle (3pt);
\filldraw[fill=black!40,draw=black!80] (30,-30) circle (3pt);

\draw (50,0) -- (50,-10) node [midway,right=2pt,fill=white] {\tiny $\mathrm{mp}_1$};
\draw[dashed] (50,-10) -- (50,-20);
\draw (50,-20) -- (50,-30) node [midway,right=2pt,fill=white] {\tiny $\mathrm{mp}_1$};

\filldraw[fill=black!40,draw=black!80] (50,0) circle (3pt)    node[anchor=south] {$\mathrm{nf}(s)$};
\filldraw[fill=black!40,draw=black!80] (50,-10) circle (3pt);    
\filldraw[fill=black!40,draw=black!80] (50,-20) circle (3pt);
\filldraw[fill=black!40,draw=black!80] (50,-30) circle (3pt);
\end{tikzpicture}
\end{center}

\textbf{First main idea: we only can apply $\mathbf{mp}_2$ once, and after applying $\mathbf{mp}_2$ we can not use $\mathbf{mp}_1$ anymore} (corresponds with the figure on the left in the above picture). When we apply $\mathrm{mp}_2$ to one of the elements in $F$, we obtain $(x',y',0)$ with $x'\leq 1$. So we can not apply $\mathrm{mp}_2$ again because the first coordinate is now either $x'=0$ or $x'=1$, which is less than $\frac{3n-5}2$. Then, we can not apply operation $\mathrm{mp}_1$ anymore because $x'\leq 1$ and $z'=0$. So we can apply $\mathrm{mp}_2$ no more than one time and only for the elements $s\in \Ap(S,\F(T))$ that have the first coordinate of $\mathrm{nf}(s)$ either $x=\frac{3n-3}2$ or $x=\frac{3n-5}2$. 

\textbf{Second main idea: after applying $\mathbf{mp}_1$ we cannot use $\mathbf{mp}_2$} (associated to the figure on the right in the above picture). If the first coordinate of $\mathrm{nf}(s)$ is different from $x=\frac{3n-3}2$ or $x=\frac{3n-5}2$, then we can only apply $\mathrm{mp}_1$ to obtain a new factorization. Afterwards we can not apply $\mathrm{mp}_2$ anymore to any new factorization obtained applying operation $\mathrm{mp}_1$. This is because each time $\mathrm{mp}_1$ is applied, the first coordinate of $ \mathrm{nf}(s)$ decreases.

So, in order to count all possible factorizations obtained from $\mathrm{nf}(s)$ with $s\in \Ap(S,\F(T))$, we must enumerate how many times we can apply operation $\mathrm{mp}_1$ first. Then, whenever the first coordinate is $x\geq \frac{3n-5}2$, we can construct an extra factorization with the use of $\mathrm{mp}_2$.

Thus, for every element in $F$, we try to apply operation $\mathrm{mp}_2$ to $\mathrm{nf}(s)$ and then, we try to apply operation $\mathrm{mp}_1$ to $\mathrm{nf}(s)$ as many times as possible.

To justify the five cases in the statement, we show first that when $y<i-2$ there are not $i$ factorizations. So take $(x,y,z)\in F$ with $y<i-2$. Recall that we can apply $\mathrm{mp}_2$ once, at most. When applying operation $\mathrm{mp}_1$, we subtract one on the second coordinate. So in this settings, we can apply at most $i-3$ times $\mathrm{mp}_1$ and, eventually, one more time $\mathrm{mp}_2$. This process adjoins up to $i-3+1=i-2$ new factorizations, that added to the original $(x,y,z)$, cannot give $i$ factorizations.

Now we consider $s\in M_i$ with $\mathrm{nf}(s)=(x,y,z)$.
\begin{itemize}
\item If the second coordinate is $y=i-2$, as we have seen above, we can obtain $i-2$ new factorizations with $\mathrm{mp}_1$. So to get exactly $i$ factorizations, we need an extra one by applying $\mathrm{mp}_2$. But this is possible if and only if $z=1$ and $x\in\left\{\frac{3n-3}2, \frac{3n-5}2\right\}$. In these cases, as $\frac{n-3}{2}\geq i$, we get $x\geq\frac{3n-5}{2}>\frac{3n-9}{2}\geq 3i$, and the first coordinate is large enough to subtract $(i-2)$ times 3. This corresponds with \textit{1)}.

\item Now we assume $y=i-1$. We can construct $i-1$ new factorizations by applying the $\mathrm{mp}_1$ operation and, adding the original one, we obtain $i$ factorizations. So, it is necessary that operation $\mathrm{mp}_2$ can not be applied. Hence, we have $z=0$ and $x\leq \frac{3n-3}2$ (when $(x,y,z)\in F_1$), or  $z=1$ and $x\leq \frac{3n-7}2$ (when $(x,y,z)\in F_3$) as in this case, if $x\geq \frac{3n-5}2$ we can apply $\mathrm{mp}_2$. In both cases, we also need that $x\geq 3(i-1)$. This yields \textit{2)} and \textit{3)}.


\item If we take $y\geq i$, we need to have $x\leq 3i-1$ to ensure that we only can apply $\mathrm{mp}_1$  $i-1$ times. As $x\leq 3i-1$ and $i\leq \frac{n-3}2$, we have $x\leq 3i-1\leq \frac{3n-11}{2}$, so operation $\mathrm{mp}_2$ can not be applied. In these cases, we will need again the extra condition $x\geq 3(i-1)$. Finally, recall that the case $y=\frac{n-3}{2}$ and $z=1$ is not in the Ap\'ery set $\Ap(S,\F(T))$. For this reason, in \textit{5)}, there is one element less. This yields \textit{4)} and \textit{5)}.\qedhere

\end{itemize}
\end{proof}
\begin{remark}
We left the proofs of the following curious facts to the reader.
\begin{enumerate}[1)]\label{curiosities}
\item $M_i=\emptyset$ for $i\ge \frac{n+1}2$. 
\item $M_i$ has $6n-12i-1$ elements for $i\le \frac{n-1}2$.
\item $\sum_{i\in \mathbb N} \# M_i=\F(T)$.
\end{enumerate}
\end{remark}

\subsection{Restrictions on the construction of \textsf L-shapes}
Let $L$ be an \textsf L-shape associated with $S$. Conditions (C1) and (C2) imply that, if $f$ is a factorization of $s\in S$ appearing in $L$, then any $f'\le f$  corresponds to a factorization of an element $s'\in S$ (actually in the Ap\'ery set of $\F(T)$). Moreover, $f'$ is the only factorization of $s'$ occurring in $L$.

With this idea in mind, we start showing that the minimal elements in $\mathsf Z(M_i)$ are enough to ``control" all the elements in $\mathsf Z(M_i)$.
  
\begin{lemma}\label{caract_nf}
Let $s\in \Ap(S,\F(T))$ and let $(\alpha,\beta,\gamma)\in \mathsf Z(s)$. Then $\mathrm{nf}(s)=(\alpha,\beta,\gamma)$ if and only if $\beta\le\frac{n-1}{2}$ and $\gamma\in\{0,1\}$.
\end{lemma}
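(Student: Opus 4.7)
The forward implication is immediate from Lemma~\ref{fact-en-ap}: if $\mathrm{nf}(s)=(\alpha,\beta,\gamma)$, then by definition of the normal form we have $\gamma<2$ and $\beta<\frac{n+1}{2}$, which are exactly $\gamma\in\{0,1\}$ and $\beta\le\frac{n-1}{2}$.

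For the converse, the only thing I need to show is that the two given bounds on $\beta$ and $\gamma$ automatically force the remaining bound $\alpha\le\frac{3n-3}{2}$ from Lemma~\ref{fact-en-ap}; then that lemma immediately identifies $(\alpha,\beta,\gamma)$ as $\mathrm{nf}(s)$. So the plan reduces to proving: if $s\in\Ap(S,\F(T))$ and $(\alpha,\beta,\gamma)\in\mathsf Z(s)$ with $\beta\le\frac{n-1}{2}$ and $\gamma\in\{0,1\}$, then $\alpha<\frac{3n-1}{2}$.

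The key observation I plan to use is the identity
\[
\tfrac{3n-1}{2}\,n \;=\; \tfrac{3n-7}{2}\,n+(3n-2) \;=\; \F(T)+(3n-2)-2+\,\text{(let me recompute)}.
\]
More cleanly: $\frac{3n-1}{2}n-\F(T)=\frac{3n-1}{2}n-\frac{3n-7}{2}n-2=3n-2$, so $\frac{3n-1}{2}n=\F(T)+(3n-2)$. Assume toward a contradiction that $\alpha\ge\frac{3n-1}{2}$ and write $\alpha=\frac{3n-1}{2}+\alpha'$ with $\alpha'\in\mathbb N$. Then
\[
s-\F(T)=\alpha' n+(\beta+1)(3n-2)+\gamma(3n-1),
\]
which is a nonnegative integer combination of the generators of $T$, hence belongs to $T\subseteq S$. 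This contradicts $s\in\Ap(S,\F(T))$.

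This forces $\alpha\le\frac{3n-3}{2}<\frac{3n-1}{2}$, and combined with the hypotheses $\beta\le\frac{n-1}{2}<\frac{n+1}{2}$ and $\gamma\in\{0,1\}$, i.e.\ $\gamma<2$, Lemma~\ref{fact-en-ap} (uniqueness of the factorization meeting these three bounds) concludes $(\alpha,\beta,\gamma)=\mathrm{nf}(s)$.

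There is no serious obstacle in this argument; the only nontrivial step is spotting the arithmetic identity $\frac{3n-1}{2}\,n=\F(T)+(3n-2)$, which exposes that a first coordinate $\alpha\ge\frac{3n-1}{2}$ is always ``redundant'' modulo $\F(T)$ in the sense that one can subtract $\F(T)$ and remain in $T$. Once that is written down, the bound on $\alpha$ drops out with no case analysis.
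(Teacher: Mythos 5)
Your proof is correct and follows essentially the same route as the paper: the paper simply cites Lemmas~\ref{fact-en-ap} and \ref{wholeApery}, and the only substantive point — that $\alpha\ge\frac{3n-1}{2}$ is impossible because $\frac{3n-1}{2}n-\F(T)=3n-2\in S$ would give $s-\F(T)\in T\subseteq S$ — is exactly the observation already made in the paper's proof of Lemma~\ref{fact-en-ap} to bound the first coordinate. Your write-up just makes that reduction explicit, which is fine.
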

\begin{proof}
This statement follows from lemmas \ref{fact-en-ap} and \ref{wholeApery}.
\end{proof}


\begin{lemma}\label{nfaresufficient}
Let $i$ be a positive integer. If we take $s,s'\in M_i$, then the following facts are equivalent:
\begin{enumerate}[1)]
\item $\mathrm{nf}(s')< \mathrm{nf}(s)$,
\item for any factorization $\zeta=(\alpha,\beta,\gamma)\in \mathsf Z(s)$ there exists a unique $\zeta'=(\alpha',\beta',\gamma')\in \mathsf Z(s')$ such that $\zeta'< \zeta$.
\end{enumerate}
\end{lemma}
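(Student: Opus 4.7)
I will prove the two implications separately, making essential use of the explicit description of $\mathsf Z(s)$ for $s \in M_i$ that is contained in the proof of Lemma \ref{casesgeneral}.

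For $(2) \Rightarrow (1)$: applying the hypothesis to $\zeta = \mathrm{nf}(s) = (x,y,z)$ gives $\zeta' = (\alpha', \beta', \gamma') \in \mathsf Z(s')$ with $\zeta' < \mathrm{nf}(s)$, so $\beta' \le y \le \frac{n-1}{2}$ and $\gamma' \le z \le 1$. Lemma \ref{caract_nf} then forces $\zeta' = \mathrm{nf}(s')$, giving $\mathrm{nf}(s') < \mathrm{nf}(s)$.

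For $(1) \Rightarrow (2)$: write $\mathrm{nf}(s) = (x,y,z)$ and $\mathrm{nf}(s') = (x',y',z')$. The tree analysis in the proof of Lemma \ref{casesgeneral} yields $\mathsf Z(s) = \{\zeta_k^{(s)} = (x-3k,\, y-k,\, z+2k) : 0 \le k \le K\}$ together with the extra factorization $\eta^{(s)} = (x - \frac{3n-5}{2},\, y + \frac{n+1}{2},\, z-1)$ exactly when $s$ lies in case~1 of Lemma \ref{casesgeneral}; the description for $s'$ is analogous. A short inspection of which pairs of cases of Lemma \ref{casesgeneral} are compatible with $\mathrm{nf}(s') \le \mathrm{nf}(s)$ (the constraints $z' \le z$ and $y' \le y$ eliminate most combinations) shows that $K$ is the same for $s$ and $s'$ and that $\eta^{(s)}$ is present if and only if $\eta^{(s')}$ is. I would then pair $\zeta_k^{(s)}$ with $\zeta_k^{(s')}$ (and, when both exist, $\eta^{(s)}$ with $\eta^{(s')}$); these companions are componentwise below their targets because translating both sides of $\mathrm{nf}(s') < \mathrm{nf}(s)$ by a common vector preserves both the inequality and its strictness, and nonnegativity is automatic since the companions already lie in $\mathsf Z(s')$.

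Uniqueness is the technical heart and the main obstacle. A would-be competitor $\zeta_j^{(s')} < \zeta_k^{(s)}$ satisfies $2(j - k) \le z - z' \in \{0,1\}$, forcing $j \le k$; $y' - j \le y - k$, giving $j \ge k - (y - y')$; and $3(k - j) \le x - x'$. When $s$ and $s'$ both lie in cases where $y$ is fixed (cases~1, 2, or~3), we have $y = y'$ and hence $j = k$ immediately; in the remaining situations $y > y'$, and the uniform bound $x - x' \le 2$ applies, read off from $x \le 3i - 1$ (cases~4 and~5) and $x' \ge 3(i-1)$ (cases~2--5). Since $3(k - j) \le 2$ then forces $k = j$, uniqueness follows. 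Finally, $\eta^{(s')}$ cannot lie below any $\zeta_k^{(s)}$ because its second coordinate $y' + \frac{n+1}{2}$ vastly exceeds $y - k$, and no $\zeta_j^{(s')}$ lies below $\eta^{(s)}$ because its third coordinate $z' + 2j = 1 + 2j \ge 1$ exceeds the third coordinate $0$ of $\eta^{(s)}$ (using that $s'$ is in case~1 whenever $\eta^{(s')}$ exists).
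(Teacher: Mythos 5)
Your argument is essentially correct for the range of $i$ it covers, but it takes a genuinely different and much heavier route than the paper. The paper's proof of 1) $\Rightarrow$ 2) is a two-line translation argument: writing $\mathrm{nf}(s)=\mathrm{nf}(s')+(a,b,c)$ with $(a,b,c)\ne(0,0,0)$, one has $(a,b,c)+\mathsf Z(s')\subseteq\mathsf Z(s)$, and since both sets have cardinality $i$ this inclusion is an equality; existence of the companion $\zeta'=\zeta-(a,b,c)<\zeta$ is then immediate, with no appeal to the case list of Lemma~\ref{casesgeneral}. Your explicit description of $\mathsf Z(s)$ as an $\mathrm{mp}_1$-chain plus an optional $\mathrm{mp}_2$-leaf reproves that equality by hand. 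What your longer route buys is a real verification of the \emph{uniqueness} clause: the identity $\mathsf Z(s)=(a,b,c)+\mathsf Z(s')$ does not by itself rule out two elements of $\mathsf Z(s')$ sitting below the same $\zeta$ (e.g.\ both elements of $\{(0,0),(2,2)\}$ lie below $(5,3)=(2,2)+(3,1)$), so the paper's ``follows easily'' is tacitly using the same structural information you spell out. Your coordinate inequalities $2(j-k)\le z-z'$, $k-j\le y-y'$, $3(k-j)\le x-x'$ and the bound $x-x'\le 2$ in the $y>y'$ situations do close this correctly.

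Two points need attention. First, Lemma~\ref{casesgeneral} is stated only for $2\le i\le\frac{n-3}{2}$, while the statement you are proving allows any positive $i$. The case $i=1$ is trivial, but $M_{(n-1)/2}$ is nonempty (it contains $\mathbf s_{(n-1)/2}$ and, per the Remark, has five elements), and there the case description changes slightly: for $y=i-2=\frac{n-5}{2}$ and $z=1$ the $F_4$ bound $x\le\frac{3n-5}{2}$ replaces the range $\frac{3n-5}{2}\le x\le\frac{3n-3}{2}$ of case~1. Your inspection therefore needs a separate (short) check for $i=\frac{n-1}{2}$, or else you should switch to the paper's case-free argument for existence. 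Second, the assertion that $K_s=K_{s'}$ and that the $\mathrm{mp}_2$-leaf occurs for $s$ if and only if it occurs for $s'$ is exactly the claim that $\mathrm{nf}(s')<\mathrm{nf}(s)$ forces $s$ and $s'$ to lie both in case~1 or both outside it; this is true (compare second coordinates: case~1 has $y=i-2$ while all other cases have $y\ge i-1$), but it is the step that makes your pairing well defined and should be written out rather than left as ``a short inspection.''
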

\begin{proof}
\emph{2)} implies \emph{1)} follows easily from the characterization of normal form given in Lemma \ref{caract_nf}.

Let us see now that statement \emph{1)} implies \emph{2)}. Write $\mathrm{nf}(s)=\mathrm{nf}(s')+(a,b,c)$, with $(a,b,c)\in \mathbb N^3\setminus\{(0,0,0)\}$. It follows that $(a,b,c)+\mathsf Z(s')\subseteq \mathsf Z(s)$. As both sets have the same cardinality, $i$, we obtain an equality. Assertion \emph{2)} now follows easily.
\end{proof}

Let $\mathcal M_i= \{ s\in M_i\mid \mathrm{nf}(s) \hbox{ is minimal in } \mathrm{nf}(M_i)\}$. From Lemma~\ref{nfaresufficient} and (C2), in order to construct an \textsf L-shape for every possible $i$, we only have to choose a factorization for each $s\in \mathcal M_i$.


The following result gives these minimal elements.

\begin{lemma}
$\mathrm{Minimals}_\le(\mathrm{nf}(M_i))=\left\{\left(\frac{3n-5}2,i-2,1\right),(3(i-1),i-1,0)\right\}$.
\end{lemma}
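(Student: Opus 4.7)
The plan is to use the explicit five-case description of $\mathrm{nf}(M_i)$ from the preceding lemma and verify directly that the two claimed tuples work. Denote the candidates by
\[
A=\left(\tfrac{3n-5}{2},\,i-2,\,1\right) \qquad\text{and}\qquad B=(3(i-1),\,i-1,\,0).
\]
Note that $A$ is the element of case \textit{1)} with smallest first coordinate, and $B$ is the element of case \textit{2)} with smallest first coordinate, so both lie in $\mathrm{nf}(M_i)$.

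First I would check that $A$ and $B$ are incomparable. Since the third coordinate of $A$ is $1$ and the third coordinate of $B$ is $0$, we cannot have $A\le B$; since the second coordinate of $A$ is $i-2$ and that of $B$ is $i-1$, we cannot have $B\le A$.

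Next I would show that every element $(x,y,z)\in \mathrm{nf}(M_i)$ dominates $A$ or $B$, by going through the five cases. In case \textit{1)} we have $(x,y,z)=(x,i-2,1)$ with $x\ge\frac{3n-5}{2}$, so $(x,y,z)\ge A$. In case \textit{2)} we have $(x,i-1,0)$ with $x\ge 3(i-1)$, so $(x,y,z)\ge B$. In cases \textit{3)}, \textit{4)}, \textit{5)} we have $x\ge 3(i-1)$, $y\ge i-1$, and $z\ge 0$, so each such element satisfies $(x,y,z)\ge B$.

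Finally I would verify that $A$ and $B$ are themselves minimal. Suppose $(x,y,z)\in\mathrm{nf}(M_i)$ with $(x,y,z)\le A$; then $y\le i-2$ forces us into case \textit{1)} (the only case whose second coordinate can be that small), and there $x\ge\frac{3n-5}{2}$ forces $(x,y,z)=A$. Similarly, $(x,y,z)\le B$ forces $z=0$ and $y\le i-1$, so we are in case \textit{2)} (case \textit{4)} requires $y\ge i$), and then $x\ge 3(i-1)$ forces equality. This covers everything needed. The only mildly delicate step is the inclusion of case \textit{3)} in the ``above $B$'' bucket rather than as a new minimal: this works precisely because incrementing $z$ from $0$ to $1$ while keeping the same $y=i-1$ and the same lower bound $x\ge 3(i-1)$ produces an element strictly above $B$, not parallel to it.
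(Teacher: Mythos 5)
Your proof is correct and follows essentially the same route as the paper: both reduce to the five-case description of $\mathrm{nf}(M_i)$ from the preceding lemma and observe that every case dominates one of the two candidate corners, which are themselves incomparable. Your write-up is just a more explicit version of the paper's one-line verification (the paper first replaces each case by its minimal corner element and then takes minimals of those five tuples).
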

\begin{proof}
From Lemma \ref{casesgeneral}, 
\begin{multline*}
\mathrm{Minimals}_\le(\mathrm{nf}(M_i))=\mathrm{Minimals}_\le\Bigg\{\Bigg(\frac{3n-5}2,i-2,1\Bigg), (3(i-1),i-1,0),\\ 
 (3(i-1),i-1,1), (3(i-1),i,0),(3(i-1),i,1)\Bigg\},
\end{multline*} and the statement follows.
\end{proof}


\begin{corollary}
Each $\mathcal M_i$ has two elements.
\[
\mathcal M_i=\left\{\mathbf s_i=\frac{3n-5}2n+(i-2)(3n-2)+3n-1,\ \mathbf s'_i=3(i-1)n+(i-1)(3n-2)\right\}.
\]
\end{corollary}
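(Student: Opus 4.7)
The corollary is essentially a direct consequence of the previous lemma identifying the minimal elements of $\mathrm{nf}(M_i)$ under the componentwise order. My plan is as follows.

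First, I would invoke the previous lemma to obtain
\[
\mathrm{Minimals}_\le(\mathrm{nf}(M_i))=\left\{\left(\tfrac{3n-5}2,i-2,1\right),\ (3(i-1),i-1,0)\right\}.
\]
By definition of $\mathcal M_i$ and by the bijectivity of $\mathrm{nf}:\Ap(S,\F(T))\to F$ (Lemma~\ref{wholeApery}), the map $s\mapsto \mathrm{nf}(s)$ restricts to a bijection between $\mathcal M_i$ and $\mathrm{Minimals}_\le(\mathrm{nf}(M_i))$. So it suffices to show that these two minimal normal forms are distinct and to compute the elements of $\Ap(S,\F(T))$ they encode via $\phi$.

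Distinctness is immediate: the two triples have different last coordinates ($\gamma=1$ versus $\gamma=0$), hence are different elements of $\mathbb N^3$, and Lemma~\ref{wholeApery} translates this into distinct elements of $\Ap(S,\F(T))$. In particular $|\mathcal M_i|=2$.

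Finally, applying the epimorphism $\phi(a,b,c)=an+b(3n-2)+c(3n-1)$ to each minimal factorization yields
\[
\phi\!\left(\tfrac{3n-5}2,i-2,1\right)=\tfrac{3n-5}2\,n+(i-2)(3n-2)+(3n-1)=\mathbf s_i,
\]
\[
\phi\bigl(3(i-1),i-1,0\bigr)=3(i-1)n+(i-1)(3n-2)=\mathbf s'_i,
\]
which gives the stated description of $\mathcal M_i$. There is no real obstacle here; the corollary is essentially a repackaging of the preceding lemma via the bijection $\mathrm{nf}$.
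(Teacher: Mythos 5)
Your proposal is correct and matches the paper's intent exactly: the paper states this corollary without proof, treating it as an immediate consequence of the preceding lemma on $\mathrm{Minimals}_\le(\mathrm{nf}(M_i))$, the injectivity of $\mathrm{nf}$, and the evaluation of $\phi$ on the two minimal triples. Nothing is missing.
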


Next lemma gives an important reduction for the construction of \textsf L-shapes. By using this result, it is only necessary to choose a factorization involved in $\mathrm{mp}_1$, that is, $(3,1,0)$ or $(0,0,2)$, to control all $\mathbf s'_i\in\mathcal M_i$, $i\in \{3,\ldots ,\frac{n-1}2\}$.
\begin{lemma}\label{restriccion-primas}
Let $L$ be an \textsf L-shape associated with $S=\langle n,3n-2,3n-1,F(T)\rangle$, and let $\mathbf{s}=3n+(3n-2)=2(3n-1)$ and $i\in\{3,\ldots, \frac{n-1}2\}$.
\begin{enumerate}[1)]
\item $\mathsf Z(\mathbf s)=\{(3,1,0),(0,0,2)\}$.
\item If $(3,1,0)\in L$, then $L\cap\mathsf Z(\mathbf s_i')=\{(3(i-1),i-1,0)\}$.
\item If $(0,0,2)\in L$, then $L\cap\mathsf Z(\mathbf s_i')=\{(0,0,2(i-1))\}$.
\end{enumerate}
In particular, $L\cap\mathsf Z(\mathbf s_i')\subset \{(3(i-1),i-1,0),(0,0,2(i-1))\}$.
\end{lemma}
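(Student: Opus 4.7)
My plan is to dispose of (1) by direct enumeration, then use (C1) applied to $\mathbf s$ to force a dichotomy, and finally use (C2) together with an explicit list of the factorizations of $\mathbf s_i'$ to conclude (2) and (3) by a short monotonicity argument.

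For (1), I first observe that $\mathbf s=6n-2\in\Ap(S,\F(T))$: indeed $\mathbf s-\F(T)$ is negative for $n\geq 7$ (since $\F(T)=\frac{3n-7}{2}n+2$ exceeds $6n-2$ there), while for $n=5$ one checks $\mathbf s-\F(T)=6\notin S$. Consequently every factorization of $\mathbf s$ in $S$ has fourth coordinate zero, so it is enough to solve $an+b(3n-2)+c(3n-1)=6n-2$ in $\mathbb N^3$. A case analysis on $c\in\{0,1,2\}$ (the only viable values, since $c\geq 3$ already overshoots) yields only $(3,1,0)$ (from $c=0$) and $(0,0,2)$ (from $c=2$), giving (1).

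The key computational identity for (2) and (3) is $\mathbf s_i'=3(i-1)n+(i-1)(3n-2)=(i-1)(6n-2)=(i-1)\mathbf s$. Writing any multiple $a\cdot(3,1,0)+b\cdot(0,0,2)$ with $a+b=i-1$ produces the $i$ distinct factorizations
\[
\bigl(3(i-1-k),\,i-1-k,\,2k\bigr),\qquad k=0,1,\ldots,i-1.
\]
Since $\mathbf s_i'\in M_i$ has exactly $i$ factorizations (all with fourth coordinate zero, as $\mathbf s_i'\in \Ap(S,\F(T))$), this is the complete list $\mathsf Z(\mathbf s_i')$.

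Now for (2), suppose $(3,1,0)\in L$. By (C1) applied to $\mathbf s$ and by (1), exactly one of $(3,1,0)$ or $(0,0,2)$ lies in $L$, so $(0,0,2)\notin L$. For every $k\geq 1$ the factorization $(3(i-1-k),i-1-k,2k)$ dominates $(0,0,2)$ componentwise, so by (C2) it cannot belong to $L$; thus only $k=0$ survives, and (C1) applied to $\mathbf s_i'$ forces $(3(i-1),i-1,0)\in L$. The proof of (3) is the mirror image: if $(0,0,2)\in L$ then $(3,1,0)\notin L$, and for every $k\leq i-2$ the factorization $(3(i-1-k),i-1-k,2k)$ dominates $(3,1,0)$ (as $i-1-k\geq 1$), so (C2) kills these and only $k=i-1$, namely $(0,0,2(i-1))$, remains. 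The final inclusion follows because (C1) applied to $\mathbf s$ guarantees one of the two hypotheses always holds. The only mildly delicate point is the enumeration in (1); once that is in hand everything else is pure coordinatewise comparison, so I do not expect a serious obstacle.
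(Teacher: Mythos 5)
Your proof is correct and follows essentially the same route as the paper: enumerate $\mathsf Z(\mathbf s)$, list the $i$ factorizations $(3(i-1-k),i-1-k,2k)$ of $\mathbf s_i'=(i-1)\mathbf s$, and use (C2)-domination of $(3,1,0)$ or $(0,0,2)$ together with (C1) applied to $\mathbf s$ to eliminate all but one of them. The only (welcome) differences are cosmetic: you verify part (1) by direct computation rather than citing \cite[Theorem 10.25]{ns}, and you justify completeness of the factorization list via $\mathbf s_i'\in M_i$, which the paper leaves implicit.
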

\begin{proof}
The first assertion is already known, and it follows from \eqref{minimalpresentation} and the proof of \cite[Theorem 10.25]{ns}.

The factorizations of the element $\mathbf{s'_i}$ are $(3x,x,2(i-1-x))$ for $x\in\{0,\ldots ,i-1\}$. Take $x\in\{1,\ldots,i-1\}$. If $(3x,x,2(i-1-x))\in L$, then $(3,1,0)\le (3x,x,2(i-1-x))$ and $(0,0,2)\le (3x,x,2(i-1-x))$. Hence Condition (C2) asserts that 
$(3,1,0),(0,0,2)\in L$, and thus $\#\mathsf Z(\mathbf{s})\cap L=2$, contradicting (C1). This means that none of these factorizations can be in $L$. Assertions \emph{2)} and \emph{3)} now follow easily by taking $x=1$ and $x=i-1$.
\end{proof}

Notice that $\mathbf s=\mathbf s_2'$. 
\begin{lemma}\label{factorizations}
Let $L$ be an \textsf L-shape associated with $S=\langle n,3n-2,3n-1,F(T)\rangle$, and let $i\in \{3,\ldots,\frac{n-1}2\}$. Then $L\cap \mathsf Z(\mathbf s_i)\subset \left\{ \left(\frac{3n-5}2,i-2,1\right),\left(0,\frac{n-3}2+i,0\right), \left(\frac{3n-5}2-3(i-2),0,2(i-2)+1\right)\right\}$. 
\end{lemma}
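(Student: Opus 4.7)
The plan is to first enumerate all $i$ factorizations of $\mathbf{s}_i$, identify which three are the ``boundary'' ones listed in the statement, and then use the closure condition (C2) combined with the two-factorization obstruction in Lemma~\ref{restriccion-primas} to eliminate every other factorization.

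First I would list $\mathsf Z(\mathbf{s}_i)$ explicitly. Starting from $\mathrm{nf}(\mathbf{s}_i)=\bigl(\tfrac{3n-5}{2},i-2,1\bigr)$, iterated application of $\mathrm{mp}_1$ (swap $(3,1,0)$ for $(0,0,2)$) produces
\[
\mathbf{f}_k=\left(\tfrac{3n-5}{2}-3k,\ i-2-k,\ 2k+1\right),\qquad k=0,1,\dots,i-2,
\]
while one application of $\mathrm{mp}_2$ to $\mathrm{nf}(\mathbf{s}_i)$ produces $\mathbf{g}=\bigl(0,\tfrac{n-3}{2}+i,0\bigr)$. As in the proof of the previous counting lemma, no other relation of the minimal presentation \eqref{minimalpresentation} can be applied (the third relation is blocked because the first coordinate never reaches $\tfrac{3n+1}{2}$, and $\mathrm{mp}_2$ cannot be iterated once the first coordinate drops below $\tfrac{3n-5}{2}$), and the count $i-1+1=i$ matches $\#\mathsf Z(\mathbf{s}_i)$. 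The three factorizations appearing in the statement are precisely $\mathbf{f}_0$, $\mathbf{f}_{i-2}$, and $\mathbf{g}$.

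Next I would show that $\mathbf{f}_k\notin L$ whenever $1\le k\le i-3$. The key claim is that for such $k$ one has $\mathbf{f}_k\ge (3,1,0)$ and $\mathbf{f}_k\ge (0,0,2)$ simultaneously. The third coordinate is $2k+1\ge 3$, giving the second inequality. For the first, the second coordinate is $i-2-k\ge 1$ (since $k\le i-3$), and the first coordinate satisfies
\[
\tfrac{3n-5}{2}-3k\ \ge\ \tfrac{3n-5}{2}-3(i-3)\ \ge\ \tfrac{3n-5}{2}-3\!\left(\tfrac{n-1}{2}-3\right)\ =\ 8\ \ge\ 3,
\]
using the hypothesis $i\le \tfrac{n-1}{2}$. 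Hence by (C2) applied to $\mathbf{f}_k\in L$, both $(3,1,0)$ and $(0,0,2)$ lie in $L$. But these are the two distinct factorizations of $\mathbf{s}=2(3n-1)$ by Lemma~\ref{restriccion-primas}(1), contradicting (C1). Therefore $\mathbf{f}_k\notin L$ for $1\le k\le i-3$, so $L\cap\mathsf Z(\mathbf{s}_i)\subseteq\{\mathbf{f}_0,\mathbf{f}_{i-2},\mathbf{g}\}$, which is the stated conclusion.

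The only delicate point is verifying the two numerical bounds that make $\mathbf{f}_k$ dominate both $(3,1,0)$ and $(0,0,2)$ throughout the range $1\le k\le i-3$; this is where the hypothesis $i\le\tfrac{n-1}{2}$ is used and why the two extreme values $k=0$ and $k=i-2$ survive (at $k=0$ the third coordinate is only $1$, so $\mathbf{f}_0\not\ge(0,0,2)$; at $k=i-2$ the second coordinate is $0$, so $\mathbf{f}_{i-2}\not\ge(3,1,0)$), and $\mathbf{g}$ is incomparable with both. No genuine obstacle is expected beyond bookkeeping the inequalities.
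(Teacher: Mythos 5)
Your proposal is correct and follows essentially the same route as the paper: the paper's proof simply lists the ``other'' factorizations $\left(\frac{3n-5}2-3x,\,i-2-x,\,2x+1\right)$, $x\in\{1,\ldots,i-3\}$, and excludes them by the same argument as in Lemma~\ref{restriccion-primas}, namely that each dominates both $(3,1,0)$ and $(0,0,2)$, so (C2) would force both factorizations of $\mathbf{s}$ into $L$, contradicting (C1). You merely make explicit the enumeration of $\mathsf Z(\mathbf{s}_i)$ and the numerical bounds that the paper leaves implicit.
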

\begin{proof}
The other factorizations of $\mathbf s_i$ are $\left(\frac{3n-5}2-3x,i-2-x,2x+1\right)$ with $x\in \{1,\ldots, i-3\}$. Arguing as in Lemma~\ref{restriccion-primas}, we deduce that they cannot be in $L$. 
\end{proof}

\begin{remark}
When $i=2,$ we have only two possible choices: $\left(0,\frac{n+1}2,0\right)$ and  $\left(\frac{3n-5}2,0,1\right).$ The first and third factorizations given by Lemma~\ref{factorizations} are the same.
\end{remark}

\subsection{A family of \textsf L-shapes} 
Now we will try to put, in an ordered way, the different possible factorizations for each element of $\{\mathbf{s}\}\cup\{\mathbf s_i\mid i\in\{2,\ldots ,\frac{n-1}2\}\}$. Such possible factorizations are given by Lemma \ref{factorizations}.

For $i=\frac{n-1}2$, the element $\mathbf s_{\frac{n-1}2}= (n-2)(3n-2)$ has the following three factorizations to choose:
\[
\left(0,n-2,0 \right), \left(\frac{3n-5}2,\frac{n-5}2,1 \right), \left(5,0,n-4 \right).
\]

\begin{enumerate}[(a)]
\item The factorization $(0,n-2,0)$ is over  $\left(0,\frac{n-3}2+i,0 \right)$ for all possible $i$. So, when choosing $(0,n-2,0)$, the $\mathsf{L}$-shape $L$ must contain the factorization $(0,\frac{n-3}2+i,0)$ of the other possible $\mathbf s_i$'s. We can choose for $\mathbf{s}$ both of its factorizations, that is, either $(0,0,2)$ or $(3,1,0)$. Therefore, we can construct two different $\mathsf L$-shapes from $(0,n-2,0)$. The choices for $\mathbf s_i$ and $\mathbf s_i'$ are respectively (recall that the rest of factorizations of elements in $\Ap(S,\F(T))$ are forced by them):
\begin{multline*}
\quad \left(0,\frac{n+1}2,0\right),\ldots ,\left(0,\frac{n-3}2+i,0\right), \ldots, (0,n-2,0),\\
(0,0,2), \ldots,(0,0,2(i-1)),\ldots, (0,0,n-3),
\end{multline*}
or
\begin{multline*}
\quad \left(0,\frac{n+1}2,0\right),\ldots ,\left(0,\frac{n-3}2+i,0\right), \ldots, (0,n-2,0),\\
(3,1,0), \ldots,(3(i-1),i-1,0),\ldots, \left(3\frac{n-3}2,\frac{n-3}2,0\right).
\end{multline*}

\item If we choose $\left(\frac{3n-5}2,\frac{n-5}2,1 \right)$ as the factorization of $\mathbf s_{\frac{n-2}2}$ in $L$,  then all the other elements must be put in $\left(\frac{3n-5}2,i-2,1 \right)$. Even $\mathbf{s}$ must be put in $(3,1,0)$ because it is under $\left(\frac{3n-5}2,\frac{n-5}2,1 \right)$. So we only can construct one $\mathsf L$-shape from $\left(\frac{3n-5}2,\frac{n-5}2,1 \right)$ (this construction corresponds to elements in $F$). The choices for $\mathbf s_i$ and $\mathbf s_i'$ are respectively
\begin{multline*}
\quad \left(\frac{3n-5}2,0,1\right),\ldots , \left(\frac{3n-5}2,i-2,1 \right), \ldots , \left(\frac{3n-5}2,\frac{n-5}2,1 \right),\\
(3,1,0), \ldots,(3(i-1),i-1,0),\ldots, \left(3\frac{n-3}2,\frac{n-3}2,0\right).
\end{multline*}
\item 
Finally, if we choose $(5,0,n-4)$ for the last element $\mathbf s_{\frac{n-1}2}$, we can not take $(3,1,0)$ for $\mathbf{s}$, because $(0,0,2)$ is below $(5,0,n-4)$. So, as we cannot choose $(3,1,0)$ for $\mathbf{s}$, we also cannot select $\left(\frac{3n-5}2,i-2,1 \right)$ for $\mathbf{s_i}$, $i\in\{2,\ldots,\frac{n-1}2\}$, since $(3,1,0)$ is below these elements. Hence we can only choose for $\mathbf s_\frac{n-3}2=(n-3)(3n-2)$ two factorizations: $(0,n-3,0)$ and $(8,0,n-6)$. Both of them are feasible. Now, if we choose $(0,n-3,0)$, the remaining elements are determined by this selection, as in (a). However, if we choose $(8,0,n-6)$, we have two new possibilities. Therefore, we obtain a new \textsf{L}-shape in each step. There are $\frac{n-5}2$ steps, so we have $\frac{n-3}2$ new \textsf L-shapes.  The choices for $\mathbf s_i$ and $\mathbf s_i'$ are respectively:
\[
\begin{array}{ll}
j=0, & \left(0,\frac{n+1}2,0\right),\ldots, \left(0,\frac{n-3}2+i,0\right),\ldots,(0,n-4,0), (0,n-3,0),(5,0,n-4),\\
& \hfill (0,0,2), \ldots,(0,0,2(i-1)),\ldots, (0,0,n-3);\\
j=1, &\left(0,\frac{n+1}2,0\right),\ldots, \left(0,\frac{n-3}2+i,0\right),\ldots, (0,n-4,0),(8,0,n-6),(5,0,n-4),\\
& \hfill (0,0,2), \ldots,(0,0,2(i-1)),\ldots, (0,0,n-3);\\
\vdots & \vdots \\[3mm]
j=\frac{n-5}2, & \left(\frac{3n-5}2,0,1\right),\ldots ,\left(\frac{3n-5}2-3(i-2),0,2(i-2)+1\right),\ldots ,(8,0,n-6),(5,0,n-4),\\
& \hfill (0,0,2), \ldots,(0,0,2(i-1)),\ldots, (0,0,n-3).
\end{array}
\]
\end{enumerate}

Summarizing, we can obtain two \textsf L-shapes from $(0,n-2,0)$, one more from $\left(\frac{3n-5}2,\frac{n-5}2,1 \right)$, and $\frac{n-3}2$ new \textsf L-shapes from $(5,0,n-4)$. So, we can construct $2+1+\frac{n-3}2=\frac{n+3}{2}$ different \textsf L-shapes for $\langle n,3n-2,3n-1,\mathrm F(T)\rangle$. 

\begin{theorem}\label{teo:final}
The number of different \textsf L-shapes for an embedding dimension four numerical semigroup is not upper bounded.
\end{theorem}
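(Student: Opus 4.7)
The plan is to take the theorem as a direct synthesis of the constructive analysis just completed, rather than attempting any new argument. Concretely, I would use the one-parameter family $S_n=\langle n,3n-2,3n-1,\F(T)\rangle$ for odd $n\geq 5$ as witnesses. The entire section was engineered so that, for each such $n$, one can both enumerate and bound the L-shapes of $S_n$ by first reducing to choices of factorizations for the minimal elements $\mathcal{M}_i$ (Lemmas \ref{nfaresufficient} and \ref{restriccion-primas}) and then running through the case analysis labelled (a), (b), (c). The count is $2+1+\frac{n-3}{2}=\frac{n+3}{2}$.

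The proof therefore reduces to two observations. First, that each of the $\frac{n+3}{2}$ configurations produced in (a)--(c) is genuinely an L-shape associated with $S_n$: by Lemma \ref{nfaresufficient} together with condition (C2), it suffices to verify that the factorizations chosen for the minimal elements in $\mathrm{nf}(\mathcal M_i)$ propagate to a well-defined, downward-closed selection of one factorization per element of $\Ap(S_n,\F(T))$, which is exactly what the tabulations in (a)--(c) record. Second, that these $\frac{n+3}{2}$ configurations are pairwise distinct as subsets of $\mathbb N^3$: by construction they differ on the factorization assigned to at least one $\mathbf s_i$ or $\mathbf s_i'$, so they differ as sets.

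Granting these bookkeeping checks, the theorem follows immediately: given any positive integer $N$, choose an odd $n\geq \max(5,2N-3)$; then $S_n$ is an embedding dimension four numerical semigroup admitting at least $\frac{n+3}{2}\geq N$ distinct L-shapes, so no uniform upper bound can exist. There is no genuine obstacle here, since the hard work has been done in the preceding lemmas; the only mild care needed is to make sure the verbal descriptions in (a)--(c) really do specify a well-defined L-shape, but this is routine once Lemma \ref{nfaresufficient} is in hand.
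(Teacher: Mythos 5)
Your proposal is correct and takes essentially the same route as the paper: the paper's proof likewise just invokes the family $S_m=\langle m,3m-2,3m-1,\mathrm F(T_m)\rangle$ for odd $m\geq 5$ and the count $\frac{m+3}{2}$ from the preceding construction, reparametrized as $m=2n-3$ to exhibit at least $n$ L-shapes. Your additional remarks about verifying that the configurations in (a)--(c) are genuine, pairwise distinct L-shapes are a reasonable (and slightly more careful) gloss on the same bookkeeping the paper leaves implicit.
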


\begin{proof}
Consider $T_m=\langle m,3m-2,3m-1\rangle$ and $S_m=\langle m,3m-2,3m-1,\mathrm{F}(T_m)\rangle$ for odd $m\geq5$. For any integer $n$ greater than four, we can take $T_{2n-3}=\langle 2n-3,3(2n-3)-2,3(2n-3)-1\rangle$ and $S_{2n-3}=\langle 2n-3,3(2n-3)-2,3(2n-3)-1,\mathrm F(T_{2n-3})\rangle$. There exists, at least, $\frac{(2n-3)+3}2=n$ different \textsf L-shapes associated to $S_{2n-3}$. 
\end{proof}
Figure \ref{ejemplo-17} shows the \textsf L-shapes obtained for $m=17$ and $n=10$.

\begin{figure}
\includegraphics{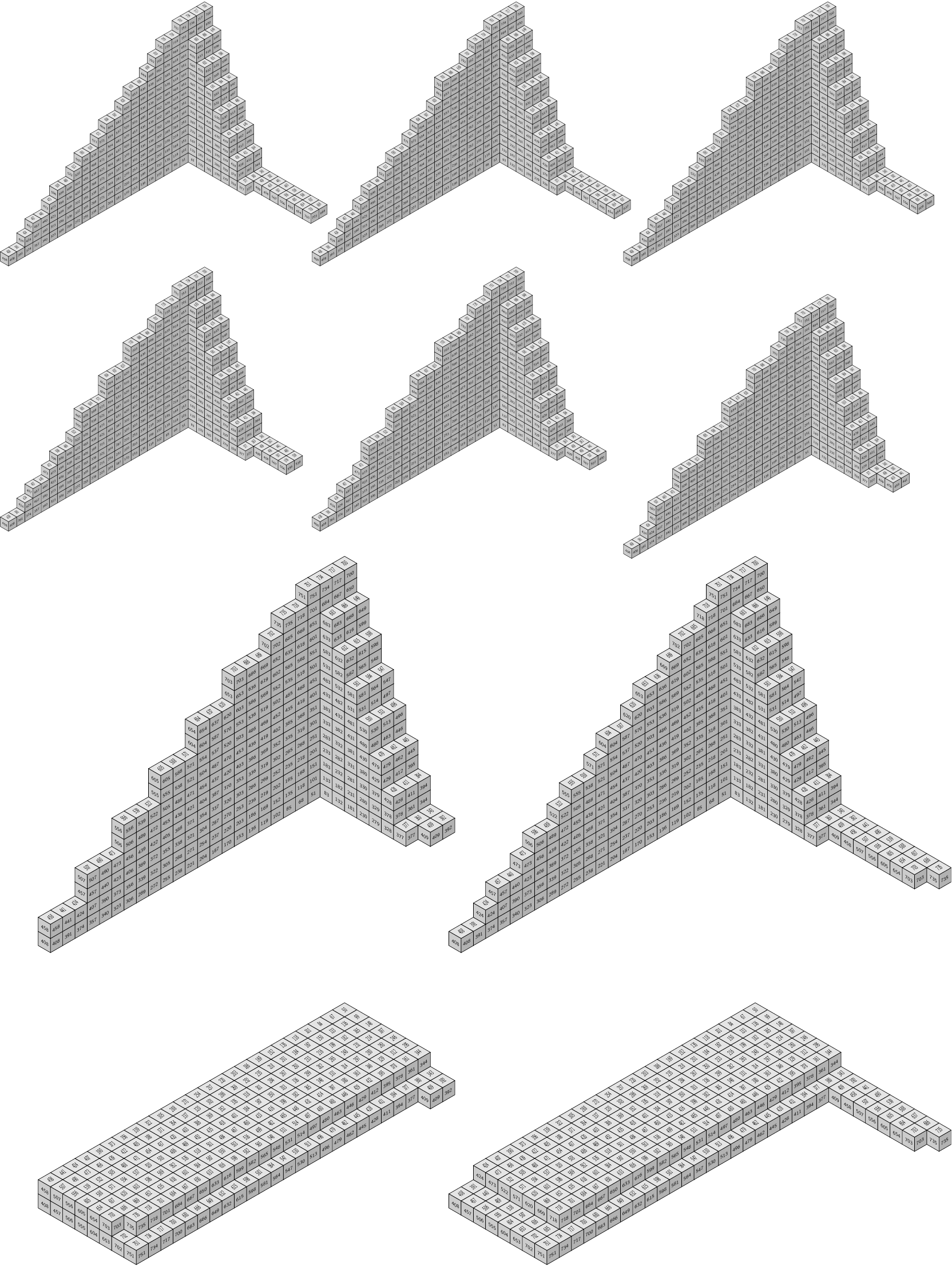}
\caption{The set of \texttt L-shapes for $S=\langle 17, 49, 50, 376 \rangle$}
\label{ejemplo-17}
\end{figure}	
\begin{remark}
From Lemma \ref{casesgeneral}, we can obtain the maximal elements in each $M_i$. From these elements we can obtain the pseudo-Frobenius numbers of $S$ because pseudo-Frobenius numbers are the maximal elements in the set $\mathrm{Ap}(S_{2n-3},\mathrm{F}(T_{2n-3}))-\mathrm{F}(T_{2n-3})=\{w-\mathrm{F}(T_{2n-3})\mid~w\in\mathrm{Ap}(S_{2n-3},\mathrm{F}(T_{2n-3}))\}$, with respect to $\le_S$. One can easily deduce that 
\[
\mathrm{PF}(S)=\left\{\frac{3n^2-9n+4}2, \frac{3n^2-7n+2}2, \frac{3n^2-13n+8}2\right\}.
\]
\end{remark}

\begin{remark}
Theorem~\ref{teo:final} has his counterpart for weighted Cayley digraphs of degree three. More precisely, the digraph $C(\mathrm{F}(T_{2n-3});2n-3,6n-9,6n-10;2n-3,6n-9,6n-10)$, for $n\geq4$,  have related $n$ minimum distance diagrams.
\end{remark}

\section{Aknowledgements}
Authors thank the support of MTM2011-28800-C02-01, 2009SGR1387, MTM2010-15595, FQM-343, FQM-5849, MTM2010-15595, FQM-343 and FEDER funds. Part of this work was done during a visit of the second author to the Universidad de Almer\'{\i}a supported by the `plan propio' of this university.

\end{document}